\numberwithin{equation}{section}
\newtheorem{theorem}{Theorem}[section]
\newtheorem{proposition}[theorem]{Proposition}
\newtheorem{corollary}[theorem]{Corollary}
\newtheorem{example}[theorem]{Example}
\newtheorem{remark}[theorem]{Remark}
\newcommand{\END}{\hfill \mbox{\raggedright $\Diamond$}}
\newcommand{\R}{\mathbf{R}}
\def \V{\mbox{${\mathcal{V}}$}}
\def \V{\mbox{$V$}}
\def \r{\mbox{${\bf {R}}$}}
\title{Quotients and lifts of symmetric directed graphs}
\author{Manuela Aguiar}
\address{Manuela Aguiar, Faculdade de Economia, Centro de Matem\'atica, Universidade do Porto,
Rua Dr Roberto Frias, 4200-464 Porto, Portugal.}
\email{maguiar@fep.up.pt}
\author{Ana Dias}
\address{Ana Dias, Departamento de Matem\'atica, Centro de Matem\'atica, Universidade do Porto,
Rua do Campo Alegre, 687, 4169-007 Porto, Portugal}
\email{apdias@fc.up.pt}
\author{Miriam Manoel}
\address{Miriam Manoel, Departmento de Matem\'atica, ICMC-Universidade de S\~ao Paulo,  C.P. 668, 13560-970 S\~ao Carlos SP, Brazil}
\email{miriam@icmc.usp.br}
\date{\today}
\begin{document}

\maketitle 

\begin{abstract}
 Given a directed graph, an equivalence relation on the graph vertex set  is said to be balanced if, for every two vertices  in the same equivalence class, the number of directed edges from vertices of each equivalence class directed to each of the two vertices is the same. In this paper we describe the quotient and lift graphs of symmetric directed graphs associated with balanced equivalence relations on the associated vertex sets. In particular, we characterize the quotients and lifts which are also symmetric.  We end with an application of these results to gradient and Hamiltonian coupled cell systems, in the context of the coupled cell network formalism of Golubitsky, Stewart and T\"or\"ok (Patterns of synchrony in coupled cell networks with multiple arrows. {\em SIAM Journal of  Applied Dynamical  Systems} {\bf 4} (1) (2005) 78--100).
\end{abstract}

\begin{flushleft}
{\it Keywords}: symmetric directed graph, quotient graph, lift graph, symmetric matrix, bidirectional coupled cell network, synchrony subspace. \\
 
\vspace*{2mm}

{\it 2000 MSC}: 05C50, 05C15, 05C90, 05C82
%05C50  	Graphs and linear algebra (matrices, eigenvalues, etc.)
%05C15  	Coloring of graphs and hypergraphs
%05C90  	Applications 
%05C82  	Small world graphs, complex networks
\end{flushleft}

\section{Introduction}
Let $G$ be a directed graph with a finite set of vertices $V =\{ 1, \ldots, n\}$ and a finite set of edges $E$. An equivalence relation $\bowtie$ on $V$ is {\em balanced} when, for any  two vertices $u$ and $v$ in the same $\bowtie$-class, the number of directed edges in $E$ from vertices of each $\bowtie$-class directed to $u$ and to $v$ is the same.  Coloring the vertices of the graph by choosing one color for each  $\bowtie$-class,  the  {\em coloring} is said to be {\em balanced} when $\bowtie$ is balanced. 

We recall that the {\em adjacency matrix} of $G$ is the $n \times n$ matrix, denoted by $A_{G}$, with non-negative integer entries, where the $(i,j)$ entry is the number of directed edges in $E$ from vertex $j$ to vertex $i$. Also, given an equivalence relation $\bowtie$ on $V$,  we define the {\em polydiagonal subspace} of $\R^n$ associated with $\bowtie$ by
\[
\Delta_{\bowtie} = \left\{ {\bf x} \in \R^n:\ x_u = x_v \mbox{ whenever } 
u \bowtie v,\, \forall u,v \in V \right\}\, .
\]
Trivially, we have that: 

\begin{theorem} \label{thm:bal_equi_inv}
An equivalence relation $\bowtie$ on the set of vertices of a directed graph $G$ is balanced if and only if the graph adjacency matrix $A_G$ leaves the polydiagonal subspace $\Delta_{\bowtie}$ invariant. 
\end{theorem}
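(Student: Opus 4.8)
The plan is to prove both implications directly from the definitions, the point being that for $\mathbf{x}\in\Delta_{\bowtie}$ the product $A_G\mathbf{x}$ depends on $G$ only through the number of edges from each $\bowtie$-class into each vertex. I would first fix notation: write $a_{ij}$ for the $(i,j)$ entry of $A_G$, and for a vertex $u$ and a $\bowtie$-class $C$ set $n_{u,C}=\sum_{j\in C}a_{uj}$, which is exactly the number of directed edges from vertices of $C$ directed to $u$. With this notation, $\bowtie$ is balanced precisely when $n_{u,C}=n_{v,C}$ for every $\bowtie$-class $C$ and every pair $u\bowtie v$.

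For the ``only if'' direction, assume $\bowtie$ is balanced and take $\mathbf{x}\in\Delta_{\bowtie}$; for each $\bowtie$-class $C$ let $x_C$ be the common value of the coordinates $x_j$, $j\in C$. Splitting the sum defining $(A_G\mathbf{x})_u$ according to the $\bowtie$-classes gives $(A_G\mathbf{x})_u=\sum_{C}x_C\,n_{u,C}$, the sum ranging over all $\bowtie$-classes. If $u\bowtie v$, then $n_{u,C}=n_{v,C}$ for each $C$, so the two sums coincide and $(A_G\mathbf{x})_u=(A_G\mathbf{x})_v$; hence $A_G\mathbf{x}\in\Delta_{\bowtie}$.

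For the ``if'' direction, I would test invariance on the characteristic vectors of the classes. For a $\bowtie$-class $C$ let $\mathbf{1}_C\in\R^n$ have $j$-th coordinate equal to $1$ if $j\in C$ and $0$ otherwise; this vector is constant on every $\bowtie$-class, so $\mathbf{1}_C\in\Delta_{\bowtie}$. Invariance then gives $A_G\mathbf{1}_C\in\Delta_{\bowtie}$, and since $(A_G\mathbf{1}_C)_u=n_{u,C}$, we obtain $n_{u,C}=n_{v,C}$ whenever $u\bowtie v$. Letting $C$ range over all $\bowtie$-classes yields the balanced condition.

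There is no genuine obstacle; the only point worth a moment's care is that the characteristic vectors $\mathbf{1}_C$ indeed lie in $\Delta_{\bowtie}$ and carry enough information to recover the balanced condition --- in fact they form a basis of $\Delta_{\bowtie}$, so checking $A_G$-invariance on them is equivalent to checking it on all of $\Delta_{\bowtie}$.
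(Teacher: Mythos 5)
Your proof is correct and complete; the paper itself offers no argument for this statement (it is introduced with ``Trivially, we have that:''), and your two directions --- grouping the sum $(A_G\mathbf{x})_u$ by $\bowtie$-classes, and testing invariance on the characteristic vectors $\mathbf{1}_C$, which span $\Delta_{\bowtie}$ --- are exactly the standard argument the paper implicitly relies on. Nothing to correct.
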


In \cite{S07} it is proved that the set of all balanced equivalence relations of a graph forms a complete lattice taking the relation of refinement. Moreover, an algebraic and algorithmic description of this lattice using the graph adjacency matrix eigenvector structure is given in \cite{AD14}.

Given a balanced equivalence relation on the vertex set $V$ of a directed graph $G$, with edge set $E$, the graph with vertex set $V/\bowtie$ and edge set given by the edges $([u]_{\bowtie},[v]_{\bowtie})$, where $(u,v) \in E$, is the {\em quotient graph} of $G$ by $\bowtie$ and is denoted by $Q = G/\bowtie$. Thus, the set of vertices of $Q$ is formed by one vertex corresponding to each $\bowtie$-equivalence class; the edges are the projections of the edges  in the original graph.  
A graph $G$ with a finite set of vertices $V$  and a finite set of edges $E$ is a {\em lift} of a graph $Q$ when $G = Q/\bowtie$ for some balanced relation $\bowtie$ on $V$. 

The {\it valency} or the {\it indegree} of a vertex is the number of arrows directed to it.  A  graph is {\it regular} when all the vertices have the same valency. It follows, trivially, that: 

\begin{remark} \normalfont
Given a balanced equivalence relation $\bowtie$ on the vertex set of a directed graph $G$, the valency of the vertices is preserved under the quotient and lifting operations.  Moreover, $G$ is regular if and only if $G/\bowtie$ is regular.
\END
\end{remark}

A relevant observation concerning lifts and quotient graphs is the following:

\begin{remark} \normalfont
Fixing $G$ and a balanced equivalence relation $\bowtie$ of $G$, the quotient graph $Q = G / \bowtie$ is unique. Varying the balanced equivalence relation $\bowtie$, in general, the quotient graph $Q = G / \bowtie$ also varies. Fixing $Q$, say with $k$ vertices and choosing $n >k$, in general, there can be several different lifts of $Q$ with $n$ vertices. See~\cite{ADGL09} for details and a systematic method of enumerating the lifts of a general (quotient) graph $Q$.
\END
\end{remark}

A directed graph $G$ is {\em symmetric} when, for every edge in $E$, the corresponding inversed edge also belongs to $E$.  Using the graph adjacency matrix $A_G$, the graph $G$ is symmetric if and only if $A_G = A_G^t$ where $A_G^t$ denotes the transpose matrix of $A_G$.

This work focuses on symmetric directed graphs, sometimes also called  {\em bidirected graphs}, and the associated quotient and lift graphs, using balanced equivalence relations. As it can easily be seen, in general, the property of a graph being bidirected may not be preserved under the lifting and quotient operations. In fact, for most of the symmetric directed graphs, there are quotient and lift graphs that are not symmetric. We illustrate that with a few examples. 

\begin{example} \normalfont Consider the symmetric Petersen graph $G$ pictured in Figure~\ref{fig:Peter}. Let $\bowtie$ be the equivalence relation on the set of vertices of $G$  with classes 
\[
\bowtie = \left\{ \{ 1,2,3,4,5\},\, \{6,7,8,9,10\}\right\},
\]
 which correspond to the coloring of the vertices of the Petersen graph presented in Figure~\ref{fig:colorPeter}. We have that $\bowtie$ is balanced. Graphically, we see that as every grey vertex of $G$ receives two edges from grey vertices and one edge from one black vertex. Similarly, every black vertex of $G$ is the input of two edges from black vertices and one edge from one grey vertex. Equivalently, we have that the order-10 adjacency matrix of $G$ leaves invariant the linear space 
 $$\Delta_{\bowtie} = \{ x: \, x_{1} = \cdots = x_{5},\  x_{6} = \cdots = x_{10}\}\, .$$ 
Note that the quotient graph $G/ \bowtie$ is the $2$-vertex symmetric graph $Q$ in Figure~\ref{fig:simple}, which has adjacency matrix 
\[ A_{Q}= \left( 
\begin{array}{cc}
2 & 1 \\
1 & 2 
\end{array} \right)\, .
\]
Moreover, the restriction of $A_G$ to $\Delta_{\bowtie}$ is similar to $A_Q$.

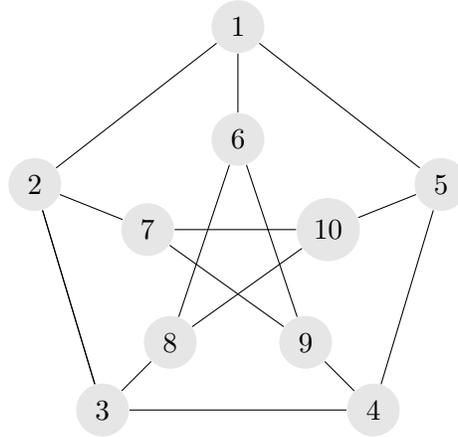
\begin{figure}[H] 
\begin{center}
\begin{tikzpicture}
  [scale=.3,auto=left,every node/.style={circle,fill=black!10}]
  \node (n1) at (9,4) {\small{1}};
  \node (n6) at (9,-1)  {\small{6}};
  \node  (n7) at (5,-5)  {\small{7}};
  \node  (n10) at (13,-5) {\small{10}};
 \node  (n2) at (0,-3)  {\small{2}};
 \node (n5) at (18,-3) {\small{5}};
\node (n8) at (6,-10) {\small{8}};
\node (n9) at (12,-10) {\small{9}};
\node (n3) at (3,-13) {\small{3}};
\node (n4) at (15,-13) {\small{4}};

 \foreach \from/\to in {n1/n2,n1/n5,n1/n6,n2/n7,n2/n3, n3/n2, n3/n4, n3/n8,n4/n5,n4/n9,n5/n10,n7/n9,n7/n10,n8/n10,n6/n8,n6/n9}
  \draw (\from) -- (\to); 
\end{tikzpicture}
\end{center}
\caption{The Petersen graph is an example of a $10$-vertex symmetric directed graph, where all the edges are bidirectional.}
\label{fig:Peter}
\end{figure}

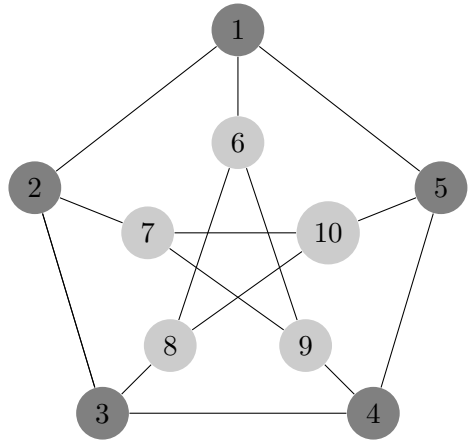
\begin{figure}[H] 
\begin{center}
\begin{tikzpicture}
  [scale=.3,auto=left,every node/.style={circle,fill=black!10}]
  \node[fill=black!50] (n1) at (9,4) {\small{1}};
  \node[fill=black!20]  (n6) at (9,-1)  {\small{6}};
  \node[fill=black!20]  (n7) at (5,-5)  {\small{7}};
  \node[fill=black!20]  (n10) at (13,-5) {\small{10}};
 \node[fill=black!50]  (n2) at (0,-3)  {\small{2}};
 \node[fill=black!50]  (n5) at (18,-3) {\small{5}};
\node[fill=black!20] (n8) at (6,-10) {\small{8}};
\node[fill=black!20] (n9) at (12,-10) {\small{9}};
\node[fill=black!50] (n3) at (3,-13) {\small{3}};
\node[fill=black!50] (n4) at (15,-13) {\small{4}};

 \foreach \from/\to in {n1/n2,n1/n5,n1/n6,n2/n7,n2/n3, n3/n2, n3/n4, n3/n8,n4/n5,n4/n9,n5/n10,n7/n9,n7/n10,n8/n10,n6/n8,n6/n9}
  \draw (\from) -- (\to); 
\end{tikzpicture}
\end{center}
\caption{A $2$-color balanced coloring $\bowtie$ of the Petersen graph $G$: every grey vertex receives two edges from grey vertices and one edge from one black vertex; every black vertex is the input of two edges from black vertices and one edge from one grey vertex.  The quotient $G/\bowtie$ is the $2$-vertex graph of Figure~\ref{fig:simple}. }
\label{fig:colorPeter}
\end{figure}

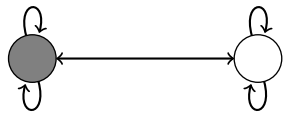
\begin{figure}[H] 
\begin{center}
\begin{tikzpicture}
  \node  (n1) at (5,2)  {\small{1}};
  \node  (n2) at (8,2)  {\small{2}};
  \AddVertexColor{black!50}{n1}
  \AddVertexColor{white}{n2}
\draw[->, thick] (n1) edge[loop above] (n1); 
\draw[<-, thick] (n1) edge[loop below ] (n1); 
\draw[->, thick] (n2) edge[loop below] (n2); 
\draw[->, thick] (n2) edge[loop above] (n2); 
 \draw[<->, thick] (n1) edge[bend left=0] (n2); 
\end{tikzpicture} 
\end{center}
\caption{A $2$-vertex symmetric directed graph $Q$. There are symmetric and non symmetric graphs $G$ with balanced colorings $\bowtie$ such that $Q = G/\bowtie$. Equivalently, there are symmetric and non-symmetric lifts $G$ of $Q$.}
\label{fig:simple}
\end{figure}

Considering now the equivalence relation $\bowtie_2$ on the set of vertices of $G$  with classes 
$$\bowtie_2 = \left\{ \{1,3,9,10\},\, \{ 2,7\},\, \{4,5, 6, 8\}\right\},$$
 corresponding to the coloring of the vertices of the Petersen graph presented in Figure~\ref{fig:3colorPeter}, we have that $\bowtie_2$ is also balanced and 
 $Q_2 = G/ \bowtie_2$ is the non-symmetric graph of Figure~\ref{fig:3colorquo}. The quotient graph $Q_2$ has the non-symmetric adjacency matrix 
 $$
 A_{Q_2} = \left( 
\begin{array}{ccc}
0 & 1 & 2 \\
2 & 1 & 0 \\
2 & 0 & 1 
\end{array} \right)\, .
 $$
\END
\end{example}

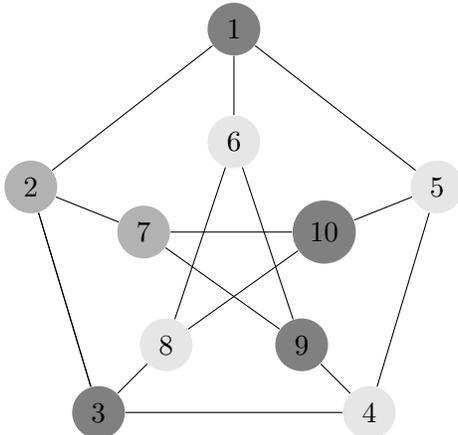
\begin{figure}[H] 
\begin{center}
\begin{tikzpicture}
  [scale=.3,auto=left,every node/.style={circle,fill=black!10}]
  \node[fill=black!50] (n1) at (9,4) {\small{1}};
  \node[fill=black!10]  (n6) at (9,-1)  {\small{6}};
  \node[fill=black!30]  (n7) at (5,-5)  {\small{7}};
  \node[fill=black!50]  (n10) at (13,-5) {\small{10}};
 \node[fill=black!30]  (n2) at (0,-3)  {\small{2}};
 \node[fill=black!10]  (n5) at (18,-3) {\small{5}};
\node[fill=black!10] (n8) at (6,-10) {\small{8}};
\node[fill=black!50] (n9) at (12,-10) {\small{9}};
\node[fill=black!50] (n3) at (3,-13) {\small{3}};
\node[fill=black!10] (n4) at (15,-13) {\small{4}};

 \foreach \from/\to in {n1/n2,n1/n5,n1/n6,n2/n7,n2/n3, n3/n2, n3/n4, n3/n8,n4/n5,n4/n9,n5/n10,n7/n9,n7/n10,n8/n10,n6/n8,n6/n9}
  \draw (\from) -- (\to); 
\end{tikzpicture}
\end{center}
\caption{A $3$-color balanced coloring $\bowtie_2$ of the Petersen graph $G$. The quotient $G/\bowtie_2$ is the $3$-vertex graph of Figure~\ref{fig:3colorquo}. }
\label{fig:3colorPeter}
\end{figure}

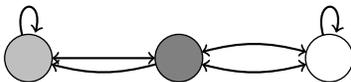
\begin{figure}[H] 
\begin{center}
\begin{tikzpicture}
  \node  (n1) at (5,2)  {\small{1}};
  \node  (n2) at (7,2)  {\small{2}};
   \node  (n3) at (9,2)  {\small{3}};
  \AddVertexColor{black!50}{n2}
  \AddVertexColor{black!25}{n1}
  \AddVertexColor{white}{n3}
\draw[->, thick] (n1) edge[loop above] (n1); 
\draw[->, thick] (n3) edge[loop above] (n3); 
 \draw[<->, thick] (n1) edge[bend left=0] (n2); 
 \draw[->, thick] (n2) edge[bend left=15] (n1);
  \draw[<->, thick] (n2) edge[bend left=-15] (n3); 
  \draw[<->, thick] (n2) edge[bend left=15] (n3); 
\end{tikzpicture} 
\end{center}
\caption{A $3$-vertex non-symmetric directed graph $Q_2$, which is the quotient  $G/\bowtie_2$ where $G$ is  the (symmetric) Petersen graph and  $\bowtie_2$ is the balanced coloring  presented in  Figure~\ref{fig:3colorPeter}.}
\label{fig:3colorquo}
\end{figure}

In  Section~\ref{sec:QLSG} we describe the quotient and lift graphs of symmetric directed graphs associated with balanced equivalence relations on their set of vertices. In particular, we characterize the quotient and lift graphs of symmetric directed graphs which are also symmetric directed graphs. In Section~\ref{sec:ApCCN}, these results are applied to gradient and Hamiltonian coupled cell systems, in the context of the formalism of Golubitsky, Stewart and co-workers~\cite{SGP03,GST05, GS06}, where coupled cell systems are dynamical systems associated with graphs (coupled cell networks). Networks of gradient and Hamiltonian coupled systems have received some attention lately, see for example  \cite{Bronski et al} and \cite{Rink Sanders} for gradient networks and  \cite{BCP15} and \cite{T17} for Hamiltonian networks, as well as  references therein.

\section{Quotients and lifts of symmetric graphs} \label{sec:QLSG}

In this section, we characterize quotients and lifts of symmetric directed graphs. In particular, we identify the quotients and the lifts of symmetric (bidirected) graphs that are also symmetric (bidirected).

\begin{theorem}\label{thm:quobidi}
An $m$-vertex graph $Q$ is a quotient graph of an $n$-vertex symmetric directed graph $G$ by a balanced equivalence relation on the set of vertices of $G$, with $m < n$, if and only if the entries of the adjacency matrix $A_{Q} = [q_{ij}]_{m \times m}$ of $Q$ satisfy the following: there are positive integers $k_1, \ldots, k_m$ summing $n$ and such that $k_i q_{ij} = k_j q_{ji}$. 
\end{theorem}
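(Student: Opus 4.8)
The plan is to prove both directions by translating the combinatorial conditions into linear-algebraic statements about adjacency matrices, using Theorem~\ref{thm:bal_equi_inv} to keep track of the balanced relation.

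For the forward direction, suppose $Q = G/\bowtie$ for a symmetric $G$ and a balanced equivalence relation $\bowtie$ with classes $C_1, \ldots, C_m$ of sizes $k_1, \ldots, k_m$ (so $\sum k_i = n$). Let $A_G = [a_{uv}]$ be the adjacency matrix of $G$. First I would record the standard fact that, because $\bowtie$ is balanced, for any $u \in C_i$ the number $\sum_{v \in C_j} a_{uv}$ does not depend on the choice of $u \in C_i$, and this common value is precisely $q_{ij}$ (this is the definition of the quotient's adjacency matrix, and it is well-defined exactly because $\bowtie$ is balanced). Now count, in two ways, the total number $N_{ij}$ of directed edges of $G$ going from a vertex of $C_j$ to a vertex of $C_i$. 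Summing over the heads $u \in C_i$ gives $N_{ij} = \sum_{u \in C_i} \sum_{v \in C_j} a_{uv} = k_i q_{ij}$. But $G$ is symmetric, so $a_{uv} = a_{vu}$, and summing instead over $v \in C_j$ (now viewed as heads, using symmetry to reinterpret the edges as going from $C_i$ to $C_j$) gives $N_{ij} = \sum_{v \in C_j} \sum_{u \in C_i} a_{vu} = k_j q_{ji}$. Hence $k_i q_{ij} = k_j q_{ji}$, which is the required relation.

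For the converse, assume we are given $A_Q = [q_{ij}]_{m \times m}$ with non-negative integer entries and positive integers $k_1, \ldots, k_m$ summing to $n$ with $k_i q_{ij} = k_j q_{ji}$; I must construct a symmetric directed graph $G$ on $n$ vertices and a balanced relation $\bowtie$ with $G/\bowtie = Q$. Partition $\{1, \ldots, n\}$ into blocks $C_1, \ldots, C_m$ with $|C_i| = k_i$ and let $\bowtie$ be the associated equivalence relation; the goal is to choose the edges of $G$ so that (i) each vertex of $C_i$ receives exactly $q_{ij}$ edges from $C_j$ (balancedness plus correct quotient), and (ii) $A_G$ is symmetric. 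Condition (i) for the ordered pair $(i,j)$ with $i \neq j$ amounts to prescribing a bipartite directed "multigraph" from $C_j$ to $C_i$ in which every vertex of $C_i$ has indegree $q_{ij}$; the total number of such arrows is $k_i q_{ij}$, and symmetry demands this equal the total number of reverse arrows from $C_i$ to $C_j$, which is $k_j q_{ji}$ — exactly the hypothesis. So the construction reduces to: given the bipartite vertex sets $C_i$ (size $k_i$) and $C_j$ (size $k_j$), find a collection of unordered pairs (with multiplicity), i.e.\ a bipartite multigraph, in which every vertex of $C_i$ has degree $q_{ij}$ and every vertex of $C_j$ has degree $q_{ji}$; then put a bidirectional edge for each such pair. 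Such a biregular bipartite multigraph exists precisely when the two "total degree" counts agree, $k_i q_{ij} = k_j q_{ji}$ (one can build it greedily, or cite the elementary fact that a bipartite degree sequence with equal side-sums and no individual bound is realizable by a multigraph). For the diagonal, within each block $C_i$ one needs a symmetric multigraph on $k_i$ vertices that is $q_{ii}$-regular; since loops and multiple edges are allowed this exists for every $q_{ii} \geq 0$ (e.g.\ a disjoint union of "doubled" cycles / a circulant construction, or simply $q_{ii}$ copies of a fixed $1$-regular symmetric multigraph on $C_i$, which itself exists for all $k_i$ once loops are permitted). Assembling these pieces over all $i \le j$ yields a symmetric $A_G$; by construction every vertex of $C_i$ receives $q_{ij}$ edges from $C_j$, so $\bowtie$ is balanced by Theorem~\ref{thm:bal_equi_inv} (equivalently directly from the definition) and $G/\bowtie$ has adjacency matrix $[q_{ij}] = A_Q$, so $G$ is the desired lift.

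I expect the main obstacle to be the realizability step in the converse: being careful that allowing loops and multiple arrows makes the required biregular bipartite multigraphs and the $q_{ii}$-regular symmetric intra-block multigraphs genuinely constructible for \emph{all} parameter values satisfying $k_i q_{ij} = k_j q_{ji}$, including degenerate cases such as $q_{ij} = 0$, $k_i = 1$, or odd $k_i$ with odd $q_{ii}$ (where a loop counted appropriately, or the symmetric-matrix viewpoint of a non-negative integer symmetric matrix with prescribed row sums, resolves the parity issue). Once that elementary realizability lemma is in hand, both directions are short bookkeeping with the edge-counting identity $N_{ij} = k_i q_{ij}$.
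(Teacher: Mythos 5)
Your proposal is correct and follows essentially the same route as the paper: a double count of the edges between two classes (using symmetry of $A_G$) for the forward direction, and a block-matrix construction with prescribed row and column sums for the converse. The only difference is that you spell out the realizability of the off-diagonal biregular bipartite multigraphs and of the symmetric diagonal blocks (via loops), a step the paper's proof asserts without detail.
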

\begin{proof} Let $G$ be an $n$-vertex symmetric directed graph. Thus its adjacency matrix is an $n\times n$ symmetric matrix. Consider a balanced equivalence relation on its set of vertices  and the corresponding quotient graph $Q$ with adjacency matrix $A_{Q} = [q_{i,j}]_{m \times m}$.  
Let $i= \{ i_1, \ldots, i_{k_i} \}$ and $j= \{ j_1, \ldots, j_{k_j} \}$ be any two such  equivalence classes.  Since the relation is balanced, it follows that the number of edges directed from vertices of the class $j$ to each vertex of the class $i$ is the constant $q_{ij}$. Thus, the total number of directed edges from vertices of $j$ to vertices of $i$ is then $k_i q_{ij}$. Moreover, as $G$ is bidirectional, all these directed edges are bidirectional. It then follows that the total number of directed edges from vertices of class $i$ to vertices of class $j$, which is $k_j q_{ji}$, satisfy 
$$k_i q_{ij} = k_j q_{ji}\, .$$

\noindent Conversely,  let  $Q$ be a graph with adjacency matrix $A_{Q} = [q_{ij}]_{m \times m}$ such that   there are positive integers $k_1, \ldots, k_m$ summing $n>m$ and such that $k_i q_{ij} = k_j q_{ji}$ for all $i,j=1, \ldots, m$.  We construct a symmetric matrix $A$ of order $n$ from the matrix $A_{Q}$ in the following way: \\
(i) Consider $A$ as a block matrix with $m \times m$ blocks $Q_{ij}$, with $1 \le i,j \le m$. \\
(ii) For $i$ and $j$ such that $i \leq j$, define the submatrix $Q_{ij}$ of $A$ as a matrix of order $k_i \times k_j$ with the row sum $q_{ij}$ and column sum   $q_{ji}$. \\
(iii) For $j <  i$, set the submatrix $Q_{ji} = Q_{ij}^T$. \\
It follows that $A$ is symmetric. Let $G$ be the $n$-vertex graph with set of vertices $\{1, \ldots, n\}$ and adjacency matrix $A$. It follows that $G$ is bidirectional. Moreover, by construction, the equivalence relation $\bowtie$ on the set of vertices of $G$  with classes $I_1 = \{1, \ldots, k_1\}, \ I_2 = \{ k_1 + 1, \ldots, k_1 + k_2\},\ \ldots, I_m = \{ k_{m-1} + 1, \ldots, k_m\}$, is balanced and $Q$ is the quotient of $G$ by $\bowtie$. That is, the graph $G$ is a bidirectional lift of the graph $Q$.
\end{proof}

\begin{remark}\normalfont 
In Lemma 4.3 of \cite{DP09}, it is obtained that in order for a graph to be a quotient of a regular symmetric graph of valency $v$ then its adjacency matrix satisfies the conditions stated in  Theorem~\ref{thm:quobidi} and the vector $(k_1, \ldots, k_m)$ is a left eigenvalue of that matrix associated with the eigenvalue $v$. 
\END  
\end{remark}

\begin{remark}\normalfont 
We recall that a square matrix $(q_{ij})$  is a {\em combinatorially symmetric matrix} when the entries satisfy the condition, $q_{ij} = 0$ if and only if  $q_{ji} = 0$, for all $i,j$ . From Theorem~\ref{thm:quobidi}, it follows that the adjacency matrix of a quotient graph of a symmetric directed graph by a balanced equivalence relation on the vertex set  is a combinatorially symmetric matrix. Thus  a necessary condition for $Q$ to be a quotient graph $G/\bowtie$ of a symmetric directed  graph $G$ under a balanced equivalence relation $\bowtie$  is that, for any two vertices in $Q$, if they are connected then there is at least one bidirectional connection between them. \END 
\end{remark}

\begin{remark}\normalfont \label{rmk:quobidi}
Let  $Q$ be an $m$-vertex graph under the hypothesis of Theorem~\ref{thm:quobidi}, that is, taking $A_{Q} = [q_{ij}]_{m \times m}$ to be its adjacency matrix, then there are positive integers $k_1, \ldots, k_m$ summing $n>m$ and such that $k_i q_{ij} = k_j q_{ji}$. We have that:\\
(i) If $k_1, \ldots, k_m$ are coprime,  then the possible symmetric directed networks $G$ constructed following the proof of the theorem are symmetric directed lifts of $Q$ with the minimum number of vertices, $n = \sum_{i=1}^m k_i$. \\
(ii)  Trivially, in general, there are lifts $G$ of the graph $Q$, where $Q= G/ \bowtie$ for balanced $\bowtie$, that are not symmetric directed graphs.
\END 
\end{remark}

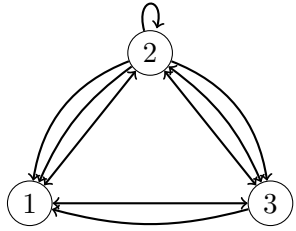
\begin{figure}[H] 
\begin{center}
\tiny{ 
\begin{tikzpicture}
  [scale=.4,auto=left,node distance=1.5cm, every node/.style={circle,draw}]  
  \node  (n1) at (9,2) [fill=white] {\small{2}} edge [loop above, thick] ();
  \node  (n2) at (5,-3) [fill=white] {\small{1}};
  \node  (n3) at (13,-3) [fill=white]{\small{3}};
 \foreach \from/\to in {n1/n2, n2/n3, n3/n1}  \draw[<->, thick]  (\from) -- (\to); 
 \draw[->, thick] (n1) edge[bend left=-15] (n2); \draw[->, thick] (n1) edge[bend left=-30] (n2); \draw[->, thick] (n3) edge[bend left=15] (n2);
 \draw[<->, thick] (n1) edge[bend left=15] (n3);
\draw[->, thick] (n1) edge[bend left=30] (n3); 
\end{tikzpicture} 
}
\end{center}
\caption{A $3$-vertex directed graph $Q$ which is not symmetric and admitting lifts $G$ which are symmetric graphs and such that $Q = G/ \bowtie$ for balanced $\bowtie$.}
\label{fig:quotbidi}
\end{figure}

\begin{example} \label{ex:quobidi}
\normalfont Consider the directed graph $Q$ given in Figure~\ref{fig:quotbidi}. The adjacency matrix  of $Q$ is
\[ A_{Q} = \left( 
\begin{array}{ccc}
0 & 3 & 2 \\
1 & 1 & 2 \\
1 & 3 & 0\\
\end{array} \right).
\]
The entries of $A_{Q}$ satisfy $q_{12} = 3 q_{21}$, $q_{13} = 2 q_{31}$ and $3 q_{23} = 2 q_{32}$, that is, the matrix  satisfies the hypothesis in Theorem~\ref{thm:quobidi} for $k_1 =1$, $k_2 = 3$ and $k_3 = 2$. Hence, by Theorem~\ref{thm:quobidi}  the graph $Q$ is a quotient of a symmetric graph $G$ by some balanced equivalence relation of the set of vertices of $G$. Moreover, by Remark~\ref{rmk:quobidi} the symmetric lifts with the minimum number of vertices have $6$ vertices. In fact, the two matrices below are adjacency matrices which realize such symmetric graphs: 
{\small 
\[ \left( 
\begin{array}{c|ccc|cc}
0 & 1 & 1 & 1 & 1 & 1 \\
\hline 
1 & 1 & 0 & 0 & 1 & 1 \\
1 & 0 & 1 & 0 & 1 & 1 \\
1 & 0 & 0 & 1 & 1 & 1 \\
\hline 
1 & 1 & 1 & 1 & 0 & 0 \\
1 & 1 & 1 & 1 & 0 & 0 
\end{array} \right) ,
\quad
%\mbox{ and }
\quad
 \left( 
\begin{array}{c|ccc|cc}
0 & 1 & 1 & 1 & 1 & 1 \\
\hline 
1 & 1 & 0 & 0 & 0 & 2 \\
1 & 0 & 1 & 0 & 1 & 1 \\
1 & 0 & 0 & 1 & 2 & 0 \\
\hline 
1 & 0 & 1 & 2 & 0 & 0 \\
1 & 2 & 1 & 0 & 0 & 0 
\end{array} \right).
\]
}
\END 
\end{example}

As illustrated in the introduction and in the above example, given a symmetric directed graph $G$ and a balanced equivalence relation $\bowtie$ on its set of  vertices, the quotient $Q= G/ \bowtie$ may not be a symmetric  graph. 
In the next result we give a necessary and sufficient condition for the quotient $Q$ to be a symmetric graph.

\begin{theorem}\label{thm:bidquo}
Let $Q$ be an $m$-vertex quotient graph of an $n$-vertex symmetric directed graph $G$, with $m < n$, associated with a balanced equivalence relation $\bowtie$ on the vertices of $G$. The quotient graph $Q$ is symmetric if, and only if, for each pair of connected vertices in $Q$, the corresponding $\bowtie$-classes on the set of vertices of $G$  have the same cardinality. 
\end{theorem}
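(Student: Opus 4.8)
The plan is to leverage Theorem~\ref{thm:quobidi}, which already tells us that $k_i q_{ij} = k_j q_{ji}$ for all $i,j$, where $k_1, \ldots, k_m$ are the cardinalities of the $\bowtie$-classes corresponding to the vertices of $Q$. Recall that $Q$ is symmetric precisely when $q_{ij} = q_{ji}$ for all $i,j$. First I would dispose of the pairs $i,j$ that are \emph{not} connected in $Q$: there $q_{ij} = q_{ji} = 0$ automatically (by combinatorial symmetry, as noted in the remark following Theorem~\ref{thm:quobidi}), so such pairs impose no constraint and are consistent with $Q$ being symmetric regardless of the class cardinalities. Hence the content of the statement concerns only connected pairs.

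For the forward direction, assume $Q$ is symmetric. Take any pair of connected vertices $i,j$ in $Q$, so that $q_{ij} \neq 0$ (equivalently $q_{ji} \neq 0$). From $k_i q_{ij} = k_j q_{ji}$ and $q_{ij} = q_{ji} \neq 0$, we may cancel the common nonzero factor $q_{ij}$ to conclude $k_i = k_j$; that is, the two $\bowtie$-classes have the same cardinality. For the converse, assume that for every pair of connected vertices in $Q$ the corresponding $\bowtie$-classes have equal cardinality. Fix any $i,j$. If $i$ and $j$ are not connected, then $q_{ij} = 0 = q_{ji}$ and there is nothing to prove. If they are connected, then by hypothesis $k_i = k_j$, and substituting into the balance identity $k_i q_{ij} = k_j q_{ji}$ gives $k_i q_{ij} = k_i q_{ji}$; since $k_i > 0$ we obtain $q_{ij} = q_{ji}$. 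As this holds for all $i,j$, the matrix $A_Q$ is symmetric, hence $Q$ is a symmetric directed graph.

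This argument is short and essentially a one-line cancellation in each direction; there is no serious obstacle, since all the combinatorial work has already been done in establishing Theorem~\ref{thm:quobidi}. The only point requiring a small amount of care is the explicit treatment of the unconnected pairs $i,j$ — one must note that the "same cardinality" condition is only claimed (and only needed) for connected pairs, and that unconnected pairs are harmless because both corresponding entries of $A_Q$ vanish. I would state this explicitly rather than sweep it into the phrase "for each pair of connected vertices," to make clear why the theorem is not vacuous and why the restriction to connected pairs is exactly the right hypothesis.
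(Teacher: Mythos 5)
Your proposal is correct and follows essentially the same route as the paper's own proof: both invoke the identity $k_i q_{ij} = k_j q_{ji}$ from Theorem~\ref{thm:quobidi}, dispose of unconnected pairs via combinatorial symmetry, and then cancel the nonzero factor ($q_{ij}$ in one direction, $k_i$ in the other). Your explicit remark on why the restriction to connected pairs is exactly the right hypothesis is a slight expository improvement, but the mathematical content is identical.
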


\begin{proof}
Let $G$ be an $n$-vertex symmetric directed graph and $Q$ an $m$-vertex quotient graph of $G$ associated with a balanced equivalence relation $\bowtie$, with classes $I_i$ for $i=1, \ldots, m$, where $n>m$. Denote the cardinality of class $I_i$ by $k_i$, for $i=1, \ldots, m$ and so the sum of $k_1, \ldots, k_m$ is $n$. Take the adjacency matrix $A_Q = [q_{ij}]$ of $Q$ according to the first part of the proof of Theorem~\ref{thm:quobidi}. Thus the positive integers $k_1, \ldots, k_m$ satisfy $k_i q_{ij} = k_j q_{ji}$. 
In particular, we have that $q_{ij} = 0$ if and only if $q_{ji} = 0$. Take $i,j$ such that $i\not=j$ and $q_{ij} \not= 0$. If  $q_{ij} = q_{ji}$ and  $k_i q_{ij} = k_j q_{ji}$ then  $k_i = k_j$, that is, the classes $I_i$ and $I_j$ have the same cardinality. Conversely, if $k_i = k_j$ and $k_i q_{ij} = k_j q_{ji}$, then 
$q_{ij} = q_{ji}$. Thus $A_Q$ is symmetric if, and only if, for every pair of vertices $i,j$ of $Q$ which are connected, that is, such that $q_{ij} \not=0$, the classes $I_i$ and $I_j$ have the same cardinality.  
\end{proof}

\begin{example}\normalfont  \label{example:quotientbi}
Consider the $6$-vertex symmetric directed graph $G$ in Figure~\ref{fig:exf} with adjacency matrix 
\[
A_{G} = \left( 
\begin{array}{llllll}
0 & 0 & 1 & 0 & 1 & 1 \\
0 & 0 & 0 & 1 & 1 & 1 \\
1 & 0 & 0 & 0 & 1 & 1 \\
0 & 1 & 0 & 0 & 1 & 1 \\
1 & 1 & 1 & 1 & 0 & 1 \\
1 & 1 & 1 & 1 & 1 & 0
\end{array}\right).
\]
\noindent Also, consider the following two balanced equivalence relations on the set of vertices of $G$: $\bowtie_1 = \{ \{ 1,2\},  \{3,4 \}, \{5,6 \} \}$ and $\bowtie_2 = \{ \{ 1,2, 3, 4\},  \{5 \}, \{6 \} \}$. For each relation, there are connections between vertices of any class to vertices of any other class. Since the cardinality of all the $\bowtie_1$-classes is the same, it follows from Theorem~\ref{thm:bidquo} that the corresponding quotient graph $Q_1 = G / \bowtie_1$ (Figure~\ref{fig:exq1} (a)) must be  symmetric; in fact, the adjacency matrix of $Q_1$ is the symmetric matrix
\[
\left( 
\begin{array}{lll}
0 & 1 & 2  \\
1 & 0 & 2  \\
2 & 2 & 1  
\end{array}\right)\, .
\]
Since the cardinality of the $\bowtie_2$-classes is not the same, it follows from Theorem~\ref{thm:bidquo} that the quotient graph $Q_2 = G/ \bowtie_2$  (Figure~\ref{fig:exq1} (b))  is not symmetric; in fact, the adjacency matrix of $Q_2$ is the non-symmetric matrix
\[
\left( 
\begin{array}{lll}
1 & 1 & 1  \\
4 & 0 & 1  \\
4 & 1 & 0  
\end{array}\right)\, .
\]
\END
\end{example}

\begin{figure}[H] 
\begin{center}
\tiny{ 
\begin{tikzpicture}
    [scale=.4,auto=left,node distance=1.5cm, every node/.style={circle,draw}]
  
  \node  (n2) at (5,2) [fill=white]  {\small{2}};
  \node  (n5) at (13,2) [fill=white]  {\small{5}};
  \node  (n1) at (21,2) [fill=white]  {\small{1}};
  \node  (n4) at (5,-3) [fill=white]  {\small{4}};
  \node  (n6) at (13,-3) [fill=white]  {\small{6}};
  \node  (n3) at (21,-3) [fill=white]  {\small{3}};
\draw[<->, thick] (n2) edge (n4); 
\draw[<->, thick] (n2) edge (n5); 
\draw[<->, thick] (n2) edge (n6); 
\draw[<->, thick] (n4) edge (n5); 
\draw[<->, thick] (n4) edge (n6); 
\draw[<->, thick] (n1) edge (n3); 
\draw[<->, thick] (n1) edge (n5); 
\draw[<->, thick] (n1) edge (n6); 
\draw[<->, thick] (n3) edge (n5); 
\draw[<->, thick] (n3) edge (n6); 
\draw[<->, thick] (n5) edge (n6); 
\end{tikzpicture} 
}
\end{center}
\caption{A $6$-vertex symmetric directed graph $G$ admitting quotient graphs that are symmetric and non symmetric, as for example,  the quotient graph $Q_1 = G / \bowtie_1$ in Figure~\ref{fig:exq1} (a)  is symmetric and the quotient graph $Q_2 = G/ \bowtie_2$ in Figure~\ref{fig:exq1} (b)  is non symmetric.}
\label{fig:exf}
\end{figure}
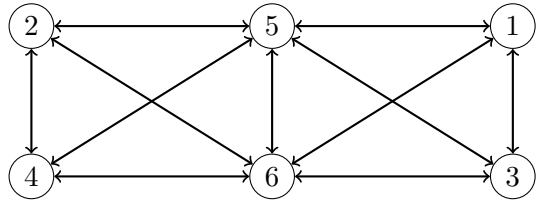

\begin{figure}[H] 
\begin{center}
\begin{tabular}{cc} 
\tiny{ 
\begin{tikzpicture}
    [scale=.4,auto=left,node distance=1.5cm, every node/.style={circle,draw}]
  
  \node  (n5) at (13,2) [fill=white]  {\small{5}} edge [loop left, thick] ();
  \node  (n1) at (21,2) [fill=white]  {\small{1}};
  \node  (n3) at (21,-3) [fill=white]  {\small{3}};
\draw[<->, thick] (n1) edge (n3); 
\draw[<->, thick] (n1) edge  [bend left=-10] (n5); 
\draw[<->, thick] (n1) edge [bend left=10] (n5); 
\draw[<->, thick] (n3) edge [bend left=-10] (n5); 
\draw[<->, thick] (n3) edge [bend left=10] (n5); 
\end{tikzpicture} 
} \qquad \qquad & \qquad  \qquad 
\tiny{ 
\vspace{2mm} 
\begin{tikzpicture}
    [scale=.4,auto=left,node distance=1.5cm, every node/.style={circle,draw}]  
  \node  (n5) at (13,2) [fill=white]  {\small{5}};
  \node  (n1) at (21,2) [fill=white]  {\small{1}} edge [loop right, thick] ();
  \node  (n6) at (13,-3) [fill=white]  {\small{6}};
\draw[<->, thick] (n1) edge [bend left=-15] (n5); 
\draw[->, thick] (n1) edge [bend left=-75] (n5); 
\draw[->, thick] (n1) edge[bend left=-30]  (n5); 
\draw[->, thick] (n1) edge[bend left=-45]  (n5); 
\draw[->, thick] (n1) edge [bend left=-60] (n5); 
\draw[<->, thick] (n1) [bend left=15] edge (n6); 
\draw[->, thick] (n1) edge [bend left=30] (n6); 
\draw[->, thick] (n1) edge[bend left=45]  (n6); 
\draw[->, thick] (n1) edge[bend left=60]  (n6); 
\draw[->, thick] (n1) edge [bend left=75] (n6); 
\draw[<->, thick] (n5) edge (n6); 
\end{tikzpicture} 
}
\end{tabular}
\end{center}
\caption{(a) A symmetric graph $Q_1$ which is the quotient $G / \bowtie_1$ of the symmetric directed graph $G$ in Figure~\ref{fig:exf} by the balanced equivalence relation $\bowtie_1$ with classes $\{1,2\},\, \{3,4\}$ and $\{5,6\}$. (b) A non symmetric graph $Q_2$ which is the quotient  $G / \bowtie_2$ by the balanced equivalence relation with classes $\{1,2,3,4\},\, \{5\}$ and $\{6\}$.}
\label{fig:exq1}
\end{figure}
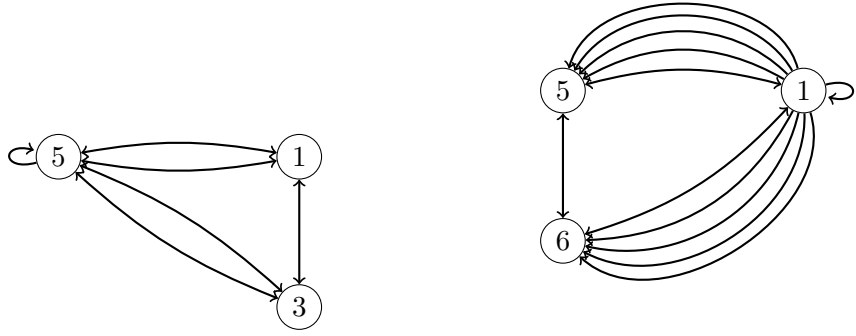

\begin{corollary}\label{cor:connectedquo}
If an $n$-vertex symmetric directed graph has an $m$-vertex connected symmetric quotient graph for a balanced relation of its set of vertices, where $n >m$, then $n$ is a multiple of $m$. 
\end{corollary}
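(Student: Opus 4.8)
The plan is to reduce the statement immediately to Theorem~\ref{thm:bidquo}. Write $\bowtie$ for the balanced equivalence relation, with classes $I_1,\dots,I_m$, and set $k_i = |I_i|$, so that $k_1 + \cdots + k_m = n$. Since $n>m$, the hypotheses of Theorem~\ref{thm:bidquo} are met, and since the quotient $Q$ is symmetric, that theorem tells us that whenever two vertices $i$ and $j$ of $Q$ are connected (i.e.\ $q_{ij}\neq 0$, equivalently $q_{ji}\neq 0$ by combinatorial symmetry of $A_Q$), the classes $I_i$ and $I_j$ have the same cardinality, $k_i = k_j$.

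Next I would use connectedness of $Q$ to propagate this equality. Given any two vertices $i,j$ of $Q$, connectedness provides a path $i = v_0, v_1, \dots, v_\ell = j$ along edges of $Q$; applying the equality of cardinalities across each edge $\{v_{t-1},v_t\}$ yields $k_i = k_{v_0} = k_{v_1} = \cdots = k_{v_\ell} = k_j$. Hence all the $k_i$ coincide with a single positive integer $k$, so that $n = \sum_{i=1}^m k_i = mk$ and therefore $m \mid n$.

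I do not expect a genuine obstacle here: the corollary is a direct combination of Theorem~\ref{thm:bidquo} with the elementary observation that a property (``the attached $\bowtie$-class has cardinality $k$'') which is forced to agree across every edge of a connected graph must then agree between all pairs of vertices. The only points requiring a line of care are that $Q$ being symmetric makes ``connected vertices in $Q$'' a symmetric notion (so connectedness of the directed graph $Q$ is unambiguous and coincides with connectedness of its underlying undirected graph), and that the standing assumption $n>m$ is precisely what licenses the use of Theorem~\ref{thm:bidquo}.
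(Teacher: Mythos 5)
Your proof is correct and is exactly the argument the paper intends: the paper states this corollary without an explicit proof, as an immediate consequence of Theorem~\ref{thm:bidquo} combined with propagating the equality of class cardinalities along a path in the connected quotient, which is precisely what you wrote.
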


We apply now the previous results to characterize the lifts of symmetric  directed graphs using balanced equivalence relations. 

\begin{corollary} \label{cor:consequences} Let $Q$ be a symmetric directed graph. We have that for any symmetric lift $G$ of $Q$, where $Q = G/ \bowtie$ and $\bowtie$ is a balanced equivalence relation at the set of vertices of $G$, any two connected vertices in $Q$ must unfold to the same number $r$ of vertices in $G$. Moreover, the lift graph $G$ has no multiple connections if and only if that number of vertices equals the number of connections between the two quotient vertices. 
\end{corollary}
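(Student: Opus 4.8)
The plan is to derive Corollary~\ref{cor:consequences} directly from Theorem~\ref{thm:bidquo} and Corollary~\ref{cor:connectedquo}, viewing lifts as the ``reverse'' of quotients. Recall that $G$ being a symmetric lift of $Q$ via a balanced relation $\bowtie$ means precisely that $Q = G/\bowtie$ with $G$ symmetric. So $Q$ is a quotient of the symmetric graph $G$ by the balanced relation $\bowtie$, and the hypotheses of Theorem~\ref{thm:bidquo} apply with the roles as stated there. The added assumption here is that $Q$ itself is symmetric.

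\begin{proof}
Let $G$ be a symmetric lift of $Q$, say $Q = G/\bowtie$ for a balanced equivalence relation $\bowtie$ on the vertex set of $G$, and let $r_i$ denote the cardinality of the $\bowtie$-class corresponding to the $i$-th vertex of $Q$. Since $G$ is symmetric and $Q = G/\bowtie$, Theorem~\ref{thm:bidquo} applies: the quotient $Q$ is symmetric if and only if, for each pair of connected vertices in $Q$, the corresponding $\bowtie$-classes have the same cardinality. As $Q$ is symmetric by hypothesis, it follows that any two connected vertices $i,j$ of $Q$ satisfy $r_i = r_j$; denote this common value by $r$. (If $Q$ is connected this $r$ is a single constant; in general it is constant on each connected component of $Q$, and the statement is understood pairwise, i.e. for connected vertices.) This proves the first assertion.

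For the second assertion, fix two connected vertices $i,j$ of $Q$ and let $q_{ij}$ be the number of edges of $Q$ from $j$ to $i$ (equivalently, by symmetry of $Q$, from $i$ to $j$). By the first part of the proof of Theorem~\ref{thm:quobidi}, each of the $r$ vertices of class $I_i$ receives exactly $q_{ij}$ edges from vertices of class $I_j$, so the total number of edges of $G$ from class $I_j$ to class $I_i$ is $r\, q_{ij}$, distributed among $r \times r = r^2$ ordered pairs of vertices (one from $I_j$, one from $I_i$). Hence the lift $G$ has no multiple connection between a vertex of $I_i$ and a vertex of $I_j$ precisely when this distribution can be realized with all entries $0$ or $1$, i.e. when $r\, q_{ij} \le r^2$, and moreover when every such realization used in constructing $G$ is $\{0,1\}$-valued; the extreme case forcing single connections for \emph{every} admissible lift is $r\, q_{ij} = r^2$, that is $q_{ij} = r$. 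Conversely, if $q_{ij} = r$ for every connected pair, then the only $r \times r$ nonnegative integer block with all row sums and column sums equal to $q_{ij} = r$ that avoids entries $\ge 2$ is forced, and running the construction in the proof of Theorem~\ref{thm:quobidi} yields a simple (multiplicity-free) symmetric lift; and any symmetric lift $G$ with no multiple connections must, having $r^2$ slots each filled by $0$ or $1$ and row sums $q_{ij}$, satisfy $q_{ij} \le r$, so that $q_{ij} = r$ is exactly the condition under which ``no multiple connections'' coincides with the count of connections equalling the common class size. This establishes the equivalence.
\end{proof}

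I expect the main obstacle to be the precise phrasing of the second assertion: one must be careful to interpret ``the lift graph $G$ has no multiple connections'' in the quantified sense intended by the authors (namely, that between corresponding quotient vertices the number of connections equals $r$, forcing each of the $r^2$ vertex pairs to carry at most one edge), rather than as a statement about a single fixed $G$. The counting itself — total edges between classes $I_i$ and $I_j$ equals $r q_{ij}$, spread over $r^2$ ordered vertex pairs — is immediate from balancedness and is borrowed wholesale from the proof of Theorem~\ref{thm:quobidi}; the first assertion is a one-line application of Theorem~\ref{thm:bidquo}.
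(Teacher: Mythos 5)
The paper states this corollary without any explicit proof, so there is no argument to compare against line by line; it is clearly meant to be read off from Theorem~\ref{thm:bidquo} (first sentence) and from the edge count in the proof of Theorem~\ref{thm:quobidi} (second sentence). Your proof of the first assertion does exactly that and is correct, including the observation that $r$ is a priori only constant on each connected component of $Q$.

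Your treatment of the second assertion, however, contains a genuine gap: it never establishes a biconditional, and its closing sentence is circular. From ``no multiple connections'' you deduce only $q_{ij}\le r$, and you then declare that ``$q_{ij}=r$ is exactly the condition under which `no multiple connections' coincides with the count of connections equalling the common class size,'' which restates the claim rather than proving it. Moreover, both directions as you phrase them fail. For the ``only if'' direction, a simple lift forces $q_{ij}\le r$ but not $q_{ij}=r$: the paper's own Example~\ref{example:nonuniquiness} exhibits a $12$-vertex symmetric lift with no multiple edges in which classes of size $r=4$ are joined by $q_{12}=1$ connections (the corresponding block is $I_4$). For your intermediate claim that $q_{ij}=r$ ``forces single connections for every admissible lift,'' take $r=q_{ij}=2$ and the block $\left(\begin{smallmatrix}2&0\\0&2\end{smallmatrix}\right)$, which has the required row and column sums and yields a legitimate symmetric lift with double edges. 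The only clean statements available here are: (a) if $G$ has no multiple connections between classes $I_i$ and $I_j$, then $q_{ij}\le r$ (this inequality is what Theorem~\ref{thm:existence of lifts}(i) actually uses), and (b) if in addition $q_{ij}=r$, then every vertex of $I_i$ is joined to every vertex of $I_j$ by exactly one edge, and conversely. You sense the difficulty in your closing comment about the intended quantification, but instead of fixing one precise reading of the corollary and proving it, the argument oscillates between several inequivalent interpretations; that is the missing step.
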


\begin{theorem} \label{thm:existence of lifts}
Let $Q$ be an $m$-vertex symmetric directed connected graph with multiple edges. Take $p$ to be the maximum of the entries of the adjacency matrix of $Q$. 
Then:\\
(i) The symmetric lifts $G$ of $Q$, where $Q = G/ \bowtie$ for balanced $\bowtie$, with no multiple edges have $mr$ vertices with $r \geq p$. \\
(ii) For any $r \geq p$ there are symmetric lifts $G$ of $Q$, where  $Q = G/ \bowtie$ for balanced $\bowtie$,  with no multiple edges and  $mr$ vertices.  
\end{theorem}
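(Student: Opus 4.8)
The plan is to reduce both parts of Theorem~\ref{thm:existence of lifts} to Theorem~\ref{thm:bidquo} and Corollary~\ref{cor:consequences}, using the explicit block construction from the proof of Theorem~\ref{thm:quobidi}. Write $A_Q=[q_{ij}]_{m\times m}$; since $Q$ is symmetric we have $q_{ij}=q_{ji}$, and since $Q$ is connected with multiple edges, $p=\max_{i,j}q_{ij}\ge 2$ and every row/column of $A_Q$ contains a nonzero off-diagonal entry along any spanning walk.

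For part~(i): suppose $G$ is a symmetric lift of $Q$ with no multiple edges and $Q=G/\bowtie$. By Corollary~\ref{cor:consequences} (equivalently Theorem~\ref{thm:bidquo} together with connectedness), every $\bowtie$-class unfolds to the same number $r$ of vertices of $G$, so $G$ has $mr$ vertices. It remains to see $r\ge p$. Fix $i,j$ with $q_{ij}=p$. Each vertex in the class $I_i$ receives exactly $p=q_{ij}$ edges from vertices of $I_j$; since $G$ has no multiple edges, these $p$ edges go to $p$ distinct vertices of $I_j$, forcing $|I_j|=r\ge p$. This is the easy direction and requires only bookkeeping.

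For part~(ii): given $r\ge p$, I would build an explicit symmetric $0$--$1$ adjacency matrix $A$ of order $mr$, as a block matrix with $m\times m$ blocks $Q_{ij}$ each of size $r\times r$, realizing $Q$ as the quotient under the class partition $I_i=\{(i-1)r+1,\dots,ir\}$. Following step~(ii) of the construction in the proof of Theorem~\ref{thm:quobidi}, for $i<j$ I need $Q_{ij}$ to be an $r\times r$ matrix with all row sums and all column sums equal to $q_{ij}$, with entries in $\{0,1\}$; then set $Q_{ji}=Q_{ij}^T$. Such a matrix exists precisely when $q_{ij}\le r$ (take, e.g., the circulant $0$--$1$ matrix with $q_{ij}$ consecutive ones per row — a sum of $q_{ij}$ distinct permutation matrices), and indeed $q_{ij}\le p\le r$. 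For the diagonal blocks $Q_{ii}$ I need a symmetric $r\times r$ $0$--$1$ matrix with zero diagonal (no loops in $G$) and all row sums equal to $q_{ii}$, i.e.\ the adjacency matrix of a $q_{ii}$-regular graph on $r$ vertices with no loops; this exists whenever $q_{ii}\le r-1$ and $q_{ii}r$ is even, so a small case check (or allowing a suitable choice of $r$, or noting $q_{ii}\le p$) is needed here — this is the one genuinely fiddly point. By construction $A=A^T$ has $0$--$1$ entries, so $G$ is symmetric with no multiple edges; the partition into the blocks $I_i$ is balanced with quotient matrix $A_Q$, so $Q=G/\bowtie$, and $G$ has $mr$ vertices.

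The main obstacle is the diagonal-block parity constraint in part~(ii): a $q_{ii}$-regular loopless graph on $r$ vertices requires $q_{ii}r$ even. I expect this is handled either by observing that loops in $Q$ can be accommodated differently (e.g.\ by distributing the required $q_{ii}$ in-edges within $I_i$ using a near-regular bipartite-type construction that need not be symmetric-regular, since only the row sums of $Q_{ii}$ are forced and $Q_{ii}$ itself need only be symmetric), or by a direct construction: take $Q_{ii}$ to be the adjacency matrix of a circulant graph on $\mathbf{Z}_r$ with connection set a symmetric subset of size $q_{ii}$ when $q_{ii}$ is even, and size $q_{ii}$ including the element $r/2$ when $q_{ii}$ is odd and $r$ is even — and when no such $r$-vertex loopless $q_{ii}$-regular graph exists one checks $q_{ii}<p$ cannot be the binding constraint. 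I would present the off-diagonal circulant construction in full and dispatch the diagonal blocks with this remark.
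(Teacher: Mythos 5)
Your part (i) is the paper's argument: Corollary~\ref{cor:consequences} together with connectedness of $Q$ forces all $\bowtie$-classes to have a common size $r$, and the absence of multiple edges forces $r\ge p$ because the $q_{ij}$ edges entering a fixed vertex of $I_i$ from $I_j$ must come from $q_{ij}$ distinct vertices of $I_j$. Your part (ii) also follows the paper's route --- the block construction from the second half of the proof of Theorem~\ref{thm:quobidi} with $k_1=\cdots=k_m=r$ --- but you make explicit what the paper merely asserts, namely that each block can actually be filled with $0$--$1$ entries; your circulant choice for the off-diagonal blocks (possible since $q_{ij}\le p\le r$) is exactly the right justification and is a genuine improvement in rigour over the paper's one-line appeal to the earlier construction.

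The one place you go astray is the diagonal blocks. You impose ``zero diagonal (no loops in $G$)'', which is not part of the statement: the theorem excludes \emph{multiple edges}, not loops, and the paper consistently treats these as separate features (``may have multiple edges and loops'' in Theorem~\ref{thm:AGF}; the quotient in Figure~\ref{fig:simple} has loops). Under your extra looplessness requirement part (ii) would in fact be \emph{false} for some $r$ --- if $q_{ii}=1$ and $r$ is odd there is no $1$-regular loopless graph on $r$ vertices --- so the parity obstruction you flag is real but self-inflicted, and your closing remark about ``$q_{ii}<p$ not being the binding constraint'' does not repair it. Once single loops are allowed, $Q_{ii}$ only needs to be a symmetric $r\times r$ matrix with entries in $\{0,1\}$ and row sums $q_{ii}$, and the circulant over $\Z_r$ with connection set $S=-S$ of size $q_{ii}$ always works: take $\lfloor q_{ii}/2\rfloor$ pairs $\{x,-x\}$ and, to adjust parity, the self-paired element $0$ (or $r/2$ when $r$ is even); such an $S$ exists for every $0\le q_{ii}\le r$, and $q_{ii}\le p\le r$. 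With that correction your write-up is a complete and more detailed version of the paper's own proof.
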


\begin{proof} Let $Q$ be an $m$-vertex symmetric directed connected graph with multiple edges and take $p$ to be the maximum of the entries of the adjacency matrix of $Q$. Let $G$ be a symmetric lift of $Q$, where $Q = G/ \bowtie$ for balanced $\bowtie$.  \\
(i) As $Q$ is symmetric, applying Corollary~\ref{cor:consequences}, it follows that if there is any connection between two vertices then they both unfold to the same number $r$ of vertices in $G$. As $Q$ is connected, it follows that there is a directed path linking all the vertices of $Q$. Thus all the vertices must unfold  to the same number $r$ of vertices in $G$. As it is required that $G$ has no multiple edges, then $r \geq p$. Thus any such lift has $mr$ vertices with $r \geq p$. \\
(ii) Let $r \geq p$ and denote the  adjacency matrix of $Q$ by $A_{Q} = [q_{ij}]_{m \times m}$. Following the procedure in the second part of the proof of Theorem~\ref{thm:quobidi} , with $k_1=\cdots=k_m = r$, we build, from the matrix $A_{Q}$,  a symmetric matrix $A$ of order $mr$ with  all the entries equal to 0 or $1$. By construction, the symmetric matrix $A$ is the adjacency matrix of a symmetric lift of $Q$ with no multiple edges,  $mr$ vertices where each vertex in $Q$ unfolds to $r$ vertices, and where $Q = G/ \bowtie$ for balanced $\bowtie$.  
\end{proof}

\begin{example}\normalfont  \label{example:nonuniquiness}
Consider the $3$-vertex symmetric directed graph presented in Figure~\ref{fig:quosym}. Note that it is connected and it has adjacency matrix 
\[
 \left( 
\begin{array}{lll}
0 & 1 & 3 \\
1 & 0 & 2\\
3 & 2 & 0
\end{array}\right)\, .
\]
Two possible symmetric lifts with no multiple edges are the $9$-vertex and $12$-vertex networks with adjacency matrices given, respectively, by 
\[
{\small 
\left( 
\begin{array}{lll|lll|lll}
0 & 0 & 0 & 1 & 0 & 0 & 1 & 1 & 1 \\
0 & 0 & 0 & 0 & 1 & 0 & 1 & 1 & 1 \\
0 & 0 & 0 & 0 & 0 & 1 & 1 & 1 & 1 \\
\hline 
1 & 0 & 0 & 0 & 0 & 0 & 1 & 1 & 0 \\
0 & 1 & 0 & 0 & 0 & 0 & 0 & 1 & 1 \\
0 & 0 & 1 & 0 & 0 & 0 & 1 & 0 & 1 \\
\hline 
1 & 1 & 1 & 1 & 0 & 1 & 0 & 0 & 0 \\
1 & 1 & 1 & 1 & 1 & 0 & 0 & 0 & 0 \\
1 & 1 & 1 & 0 & 1 & 1 & 0 & 0 & 0 
\end{array}\right), \ \ \ 
\left( 
\begin{array}{llll|llll|llll}
0 & 0 & 0 & 0    & 1 & 0 & 0 & 0      & 1 & 1 & 1 & 0 \\
0 & 0 & 0 & 0    & 0 & 1 & 0 & 0      & 0 & 1 & 1 & 1\\
0 & 0 & 0 & 0    & 0 & 0 & 1 & 0      & 1 & 0 & 1 & 1 \\
0 & 0 & 0 & 0    & 0 &0 & 0 & 1      & 1 & 1 & 0 & 1 \\
\hline 
1 & 0 & 0 & 0    & 0 & 0 & 0 & 0      & 1 & 1 & 0 & 0 \\
0 & 1 & 0 & 0    & 0 & 0 & 0 & 0      & 0 & 1 & 1 & 0 \\
0 & 0 & 1 & 0    & 0 & 0 & 0 & 0      & 0  & 0 & 1 & 1 \\
0 & 0 & 0 & 1    & 0 & 0 & 0 & 0      & 1  & 0 & 0 & 1 \\
\hline  
1 & 0 & 1 & 1    & 1 & 0 & 0 & 1      & 0 & 0 & 0 & 0 \\
1 & 1 & 0 & 1    & 1 & 1 & 0 & 0      & 0 & 0 & 0 & 0 \\
1 & 1 & 1 & 0    & 0 & 1 & 1 & 0      & 0 & 0 & 0 & 0 \\
0 & 1 & 1 & 1    & 0 & 0 & 1 & 1      & 0 & 0 & 0 & 0 \\
\end{array}\right)\, .}
\]
\END
\end{example}

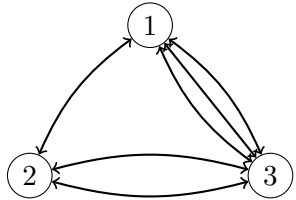
\begin{figure}[H] 
\begin{center}
\tiny{ 
\begin{tikzpicture}
    [scale=.4,auto=left,node distance=1.5cm, every node/.style={circle,draw}] 
  \node  (n1) at (9,2) [fill=white]  {\small{1}};
  \node  (n2) at (5,-3) [fill=white]  {\small{2}};
  \node  (n3) at (13,-3) [fill=white]  {\small{3}};
 \draw[<->, thick] (n1) edge[bend left=-15] (n2); 
  \draw[<->, thick] (n1) edge[bend left=-15] (n3);
  \draw[<->, thick] (n1) edge[bend left=15] (n3);
   \draw[<->, thick] (n1) edge[bend left=0] (n3);
\draw[<->, thick] (n2) edge[bend left=-15] (n3);
\draw[<->, thick] (n2) edge[bend left=15] (n3);
\end{tikzpicture} 
}
\end{center}
\caption{A $3$-vertex symmetric directed graph $Q$ admitting symmetric lifts $G$, namely, $Q = G/ \bowtie$ for balanced $\bowtie$ where $G$ is a symmetric graph.}
\label{fig:quosym}
\end{figure}

\section{Application to coupled cell systems} \label{sec:ApCCN}

We apply the results obtained in Section~\ref{sec:QLSG} to the study of two classes of coupled dynamical systems. We follow the formalism of coupled cell networks of Golubitsky, Stewart and co-workers~\cite{SGP03,GST05, GS06}, where the cells (the dynamical systems) and the couplings between the cells are abstracted through a graph -- the coupled cell network. The classes we consider are the gradient and Hamiltonian  coupled cell systems 
(Sections~\ref{subsec:gradient} and \ref{subsec:Hamiltonian}, respectively).

The coupled cell systems {\it admissible} by a given graph $G$ must be consistent with $G$ in the following way. A set of ordinary differential equations is associated with each vertex (cell). Thus  a  finite-dimensional vector space $P_v$ is assigned to each vertex $v \in V$, where
$V$ denotes the set  of vertices of the graph. The total configuration space is then $P =  \prod_{v \in V} P_v$. Let $x_v$ denote coordinates on $P_v$. The space $P_v$ is called the {\em cell phase space} of $v$ and $P$ is the {\em total phase space} of the coupled cell system consistent with $G$. Any coupled cell system associated with $G$, for a given choice of $P_v$, is 
\[
{\dot{{ x}}} = F({{x}}), \qquad (x \in P)
\]
where the system associated with vertex $v$ takes the  form 
\[
\dot{x}_v = g_v \left( x_v, x_{v_1}, \ldots, x_{v_\ell}\right).
\]
The first argument $x_v$ in $g_v$ represents the internal dynamics of the cell $v$ and each of the remaining variables $x_{v_p}$ indicates an input edge from cell $v_p$ to the cell $v$. As we are assuming that the input edges  directed to any cell $v$ are of the same type,  we have that $g_v$ is invariant under any permutation of the corresponding input variables. Moreover, systems associated with cells of the same valency are governed by the same function $g_v$. Vector fields $F$ satisfying the above properties are called $G$-{\it admissible}.  Thus if $G$ is a  regular graph with valency $\ell$, then the coupled cell systems are determined by a unique $g_v$, say $g$, and have the form
\[
\dot{x}_v = g\left( x_v; \overline{x_{v_1}, \ldots, x_{v_\ell}}\right),
\]
where the overbar  indicates the invariance of $g$ under any permutation of the vertex coordinates $x_{v_1}, \ldots, x_{v_\ell}$.

\begin{example} \normalfont 
Consider the Petersen graph in Figure~\ref{fig:Peter} which is regular of valency $3$. If each vertex phase space is $\R^m$, for some $m \geq 1$,   the total phase space is $P = (\R^{m})^{10}$ and the associated coupled cell systems have the form 
\begin{equation}
\begin{array}{l}
\dot x_{1} =g(x_{1},\overline{x_{2},x_{5}, x_{6}}) \\
\dot x_{2} =g(x_{2},\overline{x_{1},x_{3}, x_{7}}) \\
\dot x_{3} =g(x_{3},\overline{x_{2},x_{4}, x_{8}}) \\
\dot x_{4} =g(x_{4},\overline{x_{3},x_{5}, x_{9}}) \\
\dot x_{5} =g(x_{5},\overline{x_{1},x_{4}, x_{10}}) \\
\dot x_{6} =g(x_{6},\overline{x_{1},x_{8}, x_{9}}) \\
\dot x_{7} =g(x_{7},\overline{x_{2},x_{9}, x_{10}}) \\
\dot x_{8} =g(x_{8},\overline{x_{3},x_{6}, x_{10}}) \\
\dot x_{9} =g(x_{9},\overline{x_{4},x_{6}, x_{7}}) \\
\dot x_{10} =g(x_{10},\overline{x_{5},x_{7}, x_{8}}) 
\end{array},
\end{equation}\label{eq:example}
for some smooth function $g:\ \left({\bf R}^m\right)^4 \to {\bf R}^m$.  
\END
\end{example}

\vspace{1cm}

Synchrony, one of the phenomena explored by the coupled cell networks formalism, is an important concept from the point of view of the dynamics of the associated coupled cell systems. Take an equivalence relation $\bowtie$ on the vertex set of a graph and consider the associated polydiagonal subspace $\Delta_{\bowtie}$. In this setup, a {\em synchrony subspace} $\Delta_{\bowtie}$ of $G$ is a polydiagonal subspace which is flow-invariant under any coupled cell system associated with the graph structure of $G$. Moreover, a crucial result that caracterizes the relations $\bowtie$ for which  $\Delta_{\bowtie}$ is indeed a synchrony subspace uses precisely the balanced property. Theorem 4.3 of \cite{GST05}, followed by Corollary~2.10 and Theorem 4.2 of  \cite{AD14}, can be stated in the following way: 

\begin{theorem} \label{thm:main}
Given  a graph $G$, we have that the following statements are equivalent:\\
(i) An equivalence relation $\bowtie$ on the graph set of vertices is balanced. \\
(ii) For any choice $P$ of the total phase space, the polydiagonal subspace $\Delta_{\bowtie}$ is a graph synchrony subspace. \\
(iii) The polydiagonal  subspace $\Delta_{\bowtie}$  is flow-invariant for {\em all linear} admissible vector fields taking the vertex phase spaces to be $\R$.\\
(iv) The graph adjacency matrix leaves the polydiagonal subspace $\Delta_{\bowtie}$ invariant. 
\end{theorem}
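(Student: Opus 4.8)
The plan is to establish the cycle of implications $(i)\Rightarrow(ii)\Rightarrow(iii)\Rightarrow(iv)\Rightarrow(i)$, since the equivalences $(i)\Leftrightarrow(iv)$ are already at hand from Theorem~\ref{thm:bal_equi_inv}; in fact I would simply cite Theorem~\ref{thm:bal_equi_inv} to close the loop at the last step, so the real content is $(i)\Rightarrow(ii)\Rightarrow(iii)$ together with $(iii)\Rightarrow(iv)$. The implication $(ii)\Rightarrow(iii)$ is immediate: a linear admissible vector field on the total phase space $P=\R^n$ (one-dimensional cell phase spaces) is one instance of an admissible coupled cell system, so if $\Delta_{\bowtie}$ is flow-invariant for all admissible systems it is in particular flow-invariant for the linear ones. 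The implication $(i)\Rightarrow(ii)$ is the substantive dynamical statement; here I would invoke Theorem~4.3 of \cite{GST05}, which is precisely the assertion that balanced equivalence relations give rise to flow-invariant polydiagonals for admissible vector fields, for an arbitrary choice of total phase space $P$. If one wants a self-contained argument rather than a citation, the idea is: restrict an admissible $F$ to $\Delta_{\bowtie}$, parametrize a point of $\Delta_{\bowtie}$ by the common values $y_{[v]}$ on the $\bowtie$-classes, and use the balanced condition — equal numbers of input edges from each class to any two vertices in the same class — to check that $g_u$ and $g_v$ produce the same value whenever $u\bowtie v$, using the permutation-invariance of the $g$'s in their coupling arguments; this shows $F(\Delta_{\bowtie})\subseteq\Delta_{\bowtie}$, i.e. the tangency condition for flow-invariance.

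For $(iii)\Rightarrow(iv)$ the point is that the adjacency matrix $A_G$ itself acts on $\R^n$ as (a summand of) a linear admissible vector field: the map $x\mapsto A_G x$ sends $x_v$ to $\sum_{p}x_{v_p}$, the sum of the coordinates over the in-neighbours of $v$, which is exactly the linear, permutation-invariant coupling term with zero internal part. Hence if every linear admissible vector field leaves $\Delta_{\bowtie}$ invariant, then in particular $x\mapsto A_G x$ does, which is statement $(iv)$. Then $(iv)\Rightarrow(i)$ is exactly Theorem~\ref{thm:bal_equi_inv}, closing the cycle.

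The only mild subtlety — and the step I expect to require the most care — is the precise bookkeeping in $(i)\Rightarrow(ii)$: one must be careful that "admissible" here allows multiple arrows and that cells of equal valency share the same response function $g_v$, so that the equality of the numbers of inputs from each class (the balanced condition) really does force $g_u$ and $g_v$ to agree on $\Delta_{\bowtie}$; all of this is handled in \cite{GST05}, so in the interest of brevity I would state the theorem as a consequence of Theorem~4.3 of \cite{GST05} combined with Corollary~2.10 and Theorem~4.2 of \cite{AD14}, and give only the short arguments for $(ii)\Rightarrow(iii)\Rightarrow(iv)$ and the reference to Theorem~\ref{thm:bal_equi_inv} for $(iv)\Rightarrow(i)$.
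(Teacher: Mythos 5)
Your proposal is correct and matches the paper's treatment: the paper gives no proof at all, presenting the theorem as a restatement of Theorem 4.3 of \cite{GST05} together with Corollary 2.10 and Theorem 4.2 of \cite{AD14}, which is exactly the citation you fall back on. Your additional sketches of $(ii)\Rightarrow(iii)\Rightarrow(iv)$ and the appeal to Theorem~\ref{thm:bal_equi_inv} for $(iv)\Leftrightarrow(i)$ are sound but go beyond what the paper records.
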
 

Hence, we have two types of criteria to enumerate the synchrony subspaces of a graph: one is algebraic using the graph adjacency matrix; the other is graph theoretical, by finding the balanced equivalence relations (colorings) of the set of vertices of the graph.

Fixing a graph $G$ and one of its synchrony subspaces $\Delta_{\bowtie}$, by Theorem 5.2 of \cite{GST05} we have that the restriction of any coupled cell system associated with $G$ is a coupled cell system consistent with the quotient graph $Q = G/\bowtie$. Moreover, any admissible vector field of $Q$ can be seen as the restriction to $\Delta_{\bowtie}$  of an admissible vector field of $G$. 

\begin{example}\normalfont 
Consider the Petersen graph $G$ (Figure~\ref{fig:Peter}) and the balanced equivalence relation $\bowtie_2 = \left\{ \{1,3,9,10\},\, \{ 2,7\},\, \{4,5, 6, 8\}\right\},$ corresponding to the coloring of its  vertices  presented in Figure~\ref{fig:3colorPeter}. Choosing the representatives  $1,2,4$ of the $\bowtie_2$-classes, we have that  the equations (\ref{eq:example}) restricted to $\Delta_{\bowtie_2}$ are 
\[
\begin{array}{l}
\dot x_{1} =f(x_{1},\overline{x_{4},x_{2}, x_{4}}) \\
\dot x_{2} =f(x_{2},\overline{x_{1},x_{1}, x_{2}}) \\
\dot x_{4} =f(x_{4},\overline{x_{4},x_{1}, x_{1}}) 
\end{array}, 
\]
which are consistent with the quotient graph $Q_2 = G / \bowtie_2$ (Figure~\ref{fig:3colorquo}). 
\END
\end{example}

The importance of the existence of flow-invariant subspaces under the dynamics of a system has been long recognized. For example,  flow-invariant subspaces can imply the existence of non-generic dynamical behavior such as heteroclinic cycles and networks, leading to complex  dynamics.  In the context of coupled cell systems, synchrony subspaces are examples of flow-invariant spaces, having an important  role  in understanding the dynamics of the coupled cell systems associated with a given graph. 
Moreover,  the restriction of the dynamics of a coupled cell system to a synchrony subspace is again a coupled cell system whose structure is consistent with a graph that has fewer vertices (the quotient graph), with  a lower-dimensional phase space, and potentially with known dynamics. The dynamics of the quotient can then be lifted to the overall phase space and give full information concerning the dynamics on those synchrony subspaces. Moreover,  the knowledge of the set of all synchrony subspaces of a coupled cell graph and the associated quotients can be used, for example, to investigate the possibility of the associated coupled cell systems to support  heteroclinic behavior. See for example~\cite{AADF11, AD17}.

In \cite{MR14}, gradient coupled cell systems are addressed and it  is shown that only symmetric directed graphs can have admissible coupled cell systems with gradient structure. An analogous result is obtained in~\cite{CBP17} for Hamiltonian coupled cell systems. Moreover, there are in general coupled cell systems associated with symmetric directed graphs that are neither  gradient nor Hamiltonian. Nevertheless, for symmetric graphs, there are always coupled cell systems with each one of those  structures, as explained in \cite{MR14, CBP17}.  The next two sections are devoted  to these two types of coupled cell systems. Using the results of Section~\ref{sec:QLSG}, we show that the extra structure of being gradient or Hamiltonian for the coupled cell systems admissible by a (symmetric) network $G$ can be lost by the systems admissible by a (non-symmetric) quotient network $Q$ of $G$. Conversely, we see that gradient (Hamiltonian) dynamics can appear for a symmetric quotient network $Q$ of a symmetric or a non-symmetric network $G$ but whose associated dynamics is not gradient (Hamiltonian). 

Throughout,  the total number of cells  in a  coupled system is denoted by $n$  and we take coordinates on the (identical) cells so that the cells state space is $\r^m$, for some $m \geq1$.

\subsection{Gradient coupled cell systems} \label{subsec:gradient}

In this section we investigate quotients and lifts of symmetric graphs related to gradient structure,  that is, associated with a coupled dynamical system given by the  (negative) gradient  of a smooth function $f: P \to \r$,
\begin{equation} \label{eq:gradient}
\dot{x} \ = \ - \nabla f(x), 
\end{equation}
where $x = (x_1, \ldots, x_n)$ denotes the state variable of the total phase space  $P = (\r^m)^n$ and, for $i=1, \ldots n,$  $x_i \in \r^m$ is the variable of cell $i$.   

The function $f$ whose gradient is an admissible vector field for a given graph $G$ is called an {\it admissible gradient function} for $G$. Hence, 
the gradient structure of a coupled system is in one-to-one correspondence with the existence of an admissible gradient function associated with it. We also notice that a necessary condition for an admissible vector field to be gradient  is that  each individual cell to be also gradient. 

In \cite{MR14} the characterization of admissible gradient functions  is given for symmetric graphs with no multiple edges. We remark that the result generalizes directly  for symmetric graphs with multiple edges and loops. The result is given in Theorem~\ref{thm:AGF} below. 

We first recall that a graph is called {\em bipartite} if its set of vertices $\V$ can be devided into two disjoint subsets $\V_1$ and $\V_2$ such that every edge of $G$ connects a vertex in a subset to a vertex in the other. 
If the graph is bipartite, we reorder the numbering of cells if necessary so that $\V_1 = \{1, \ldots, r\}$ and $\V_2 = \{r+1, \ldots, n\}$, $r >1$, and $A_G$ is an order-$n$   matrix of the block form
\[ A_G = \left(    \begin{array}{cc}
0_r & B \\
C & 0_{n-r}  \end{array}  \right). \]
 We then have:  

\begin{theorem} \label{thm:AGF}
Let $G$ be a symmetric graph that may have multiple edges and loops, and let $A_{G}~ =~(a_{ij})$ denote the adjacency matrix of the graph $G$. A  function  $f: P \to { \r}$ is an  admissible gradient function associated with $G$ if, and only if, 
there exist  smooth functions $\alpha : \r^m \to \r$ and
$\beta : \r^{2m}  \to \r$ such that 
\begin{equation} \label{eq:AGF}
 f(x) \ = \   \sum_{i=1, \ i \leq j}^n   a_{ij} \beta(x_i, x_j) + \sum_{i=1}^{n} \alpha(x_{i}).
\end{equation}
If $G$ is bipartite and in addition there exist $v_i \in \V_1$ and $v_j \in \V_2$ with same valency, or if $G$ is not bipartite, then $\beta$ is invariant under permutation of the vector variables. 
\end{theorem}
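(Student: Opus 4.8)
The plan is to treat the two implications of the equivalence separately and then to read off the symmetry of $\beta$ from the admissibility constraints. Throughout I would use that $A_G=A_G^t$ and that ``$-\nabla f$ admissible'' means that the component of $-\nabla f$ at each vertex $v$ is a function $g_{\ell_v}(x_v;\overline{\cdots})$ that is invariant under permutation of its inputs (the neighbours of $v$, counted with multiplicity $a_{vw}$) and that depends only on the valency $\ell_v$.

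\textbf{Sufficiency.} Here I would simply differentiate. Writing $f$ as in~(\ref{eq:AGF}) and collecting the terms containing $x_v$, the summand $\sum_i\alpha(x_i)$ produces a common internal term $\nabla\alpha(x_v)$, while each edge incident to $v$ contributes a coupling term weighted by $a_{vw}$. Using $a_{vw}=a_{wv}$, one checks that $\nabla_{x_v}f$ is the internal term plus a sum over the neighbours of $v$, counted with multiplicity $a_{vw}$, of a single bivariate coupling function; hence $-\nabla_{x_v}f=g_{\ell_v}(x_v;\overline{\cdots})$ with $g_{\ell_v}$ depending only on $\ell_v$, as required. The only delicate point is invariance under permutation of the inputs of $v$. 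When $\beta$ is symmetric this is immediate, since $\partial_1\beta(x_v,x_w)=\partial_2\beta(x_w,x_v)$ and every neighbour enters through the same slot of $\beta$. In the bipartite case, after the reordering $\V_1=\{1,\dots,r\}$, $\V_2=\{r+1,\dots,n\}$, every neighbour of a given vertex lies in the opposite part, so all of that vertex's inputs again enter through one and the same slot of $\beta$, and permutation invariance holds even when $\beta$ is not symmetric.

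\textbf{Necessity.} Assume $-\nabla f$ is admissible. Since the $u$-component of $\nabla f$ depends only on $x_u$ and on the states of the neighbours of $u$, the mixed second derivative $\partial^2 f/\partial x_u\partial x_w$ vanishes whenever $u\neq w$ and $u,w$ are not adjacent; moreover the permutation symmetry of $g_{\ell_u}$ forces the dependence of $\nabla_{x_u}f$ on each of its neighbours to be governed by one and the same bivariate function. Integrating these first-order data back up -- the onsite part from the diagonal blocks, the coupling part from the off-diagonal blocks -- I would reconstruct functions $\alpha$ and $\beta$ realizing~(\ref{eq:AGF}); the coincidence of the per-edge functions into a single $\beta$ and of the onsite functions into a single $\alpha$ is exactly the homogeneity enforced by ``same $g$ for equal valency'' together with $A_G=A_G^t$. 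This is the reconstruction carried out in~\cite{MR14} for graphs without multiple edges, and I expect the argument to transfer once loops and multiple edges are bookkept through the integer weights $a_{ij}$. I anticipate the main obstacle to lie precisely here: organizing the antidifferentiation so that it yields genuinely common $\alpha$ and $\beta$ and leaves no residual higher-order interaction terms, rather than merely a per-vertex or per-edge family of functions.

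\textbf{Symmetry of $\beta$.} To obtain the last clause I would analyse the antisymmetric part of $\beta$. Decompose $\beta=\beta_s+\beta_a$ into symmetric and antisymmetric parts. The part $\beta_s$ contributes to each vertex equation a sum of identical per-neighbour responses and is automatically permutation-invariant. The part $\beta_a$ feeds the input from a neighbour $w$ into the equation at $v$ through the first or the second slot according to the orientation of $\{v,w\}$, and since $\beta_a$ is antisymmetric these two contributions are the negatives of one another; hence permutation invariance at $v$ forces all edges at $v$ to enter through a single slot, i.e. to be oriented the same way relative to $v$. A globally consistent such orientation exists only when $G$ is bipartite, the two parts playing the roles of sources and sinks. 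In that situation a source and a sink of the same valency are governed by the same $g_\ell$ while feeding $\beta_a$ through opposite slots, and, using antisymmetry, matching their responses forces $\beta_a$ to vanish. Therefore, whenever $G$ is not bipartite, or is bipartite but has two equal-valency vertices lying in the two different parts, $\beta_a=0$ and $\beta$ is invariant under permutation of its two vector arguments, as claimed.
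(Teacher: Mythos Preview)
Your proposal is correct and follows essentially the same route as the paper's own proof, which gives no self-contained argument but simply defers to \cite[Theorem~2.4]{MR14} and records the two bookkeeping changes needed here: carry the integer weights $a_{ij}$ throughout (since an edge between two vertices need not be unique), and in the non-bipartite step replace ``maximal subgraph containing no odd cycles'' by ``maximal subgraph containing no odd cycles \emph{nor multiple edges}''. Your sketch of sufficiency and of the reconstruction in the necessity direction is in fact more explicit than what the paper writes down.

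The one place where your presentation differs is the symmetry clause for $\beta$. The paper, via \cite{MR14}, organizes this through odd cycles and a maximal bipartite subgraph; you instead split $\beta=\beta_s+\beta_a$ and phrase the obstruction as the nonexistence of a globally consistent orientation compatible with the ordering $i\le j$ in~(\ref{eq:AGF}). These are equivalent formulations of the same topological fact (such an orientation exists iff $G$ has no odd cycle iff $G$ is bipartite), so nothing is lost. One small caution: the assertion ``permutation invariance at $v$ forces all edges at $v$ to enter through a single slot'' is not literally true---it also allows the degenerate possibility $\beta_a(x,y)=c(x)-c(y)$, in which every input contributes the same constant regardless of slot. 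This residual case is killed by your final matching step (comparing two equal-valency vertices that feed $\beta_a$ through opposite slots), so the argument closes, but you should make that elimination explicit.
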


\begin{proof}
It follows  the steps of the proof of \cite[Theorem 2.4]{MR14}, just observing that here the existence of an edge between two vertices in the graph may not be unique. Also, for the proof of the non-bipartite case, instead of taking the maximal subgraph containing no odd cycles we take the maximal subgraph containing no odd cycles nor multiple edges.  
\end{proof}

\begin{example} \label{ex:S2} \normalfont 
The simplest graph with multiple edges is given in Figure~\ref{fig:quoD4}. From Theorem~\ref{thm:AGF}, choosing the cell phase spaces to be $\r$,  the function $f: \r^2 \to \r$,
\begin{equation} \label{eq:AGFquoD4}
 f(x_1, x_2) =  - 2x_1^2 x_2 - 2 x_1 x_2^2 
\end{equation} 
is an admissible gradient function for this graph.
\END
\end{example}

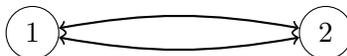
\begin{figure}[H] 
\begin{center}
\begin{tikzpicture}
 [scale=.15,auto=left, node distance=1.5cm, every node/.style={circle,draw}]
 \node[fill=white] (n4) at (4,2) {\small{1}};
 \node[fill=white] (n1) at (30,2)  {\small{2}};
 \draw[<->, thick] (n1) edge  [bend left=-10] (n4); 
\draw[<->, thick] (n1) edge [bend left=10] (n4); 
\end{tikzpicture}
\end{center}
\caption{The simplest 2-cell graph with multiple edges.}
\label{fig:quoD4}
\end{figure}

\begin{example} \normalfont
Consider the graph of Figure~\ref{fig:exq1}.  From Theorem~\ref{thm:AGF}, the general form of the admissible gradient functions associated with this graph is given by
\begin{equation} \label{eq:exq1}
 f(x_1, x_3, x_5) = \beta(x_1, x_3) + 2 \beta(x_1, x_5)  + 2 \beta(x_3, x_5) + \beta(x_5, x_5) +  \alpha(x_1) + \alpha(x_3) +\alpha(x_5), 
\end{equation}
with  $\beta(x,y) = \beta(y,x)$. 
\END
\end{example}

It is a consequence of Theorem~\ref{thm:existence of lifts} that any symmetric directed graph admits a symmetric directed lift. We remark that an admissible vector field associated with a symmetric graph  can admit a restriction to an invariant subspace with a gradient dynamics even if it is not of a gradient type, as we illustrate in the next example.

\begin{example} \normalfont 
Consider  the symmetric directed graph of Figure~\ref{fig:D4}. We recall from \cite[Lemma 2.2]{MR14} that for any graph with more than three vertices, a necessary condition for a function $f$ to be admissible for this graph is that
\begin{equation} \label{eq:3rdderivative}
 \frac{\partial^3 f}{\partial x_{i} \partial x_{j} \partial x_{k}} \equiv 0,  
 \end{equation}
for any two-by-two distinct cells $i, j, k$. It then follows that the system of equations
\begin{equation} 
\begin{array}{l}
\dot x_{1} = x_2 x_4 + x_1 (x_2+x_4) \\
\dot x_{2} =x_1x_3 + x_2(x_1+x_3) \\
\dot x_{3} = x_2 x_4 + x_3 (x_2+x_4)  \\
\dot x_{4} = x_1 x_3 + x_4 (x_1+x_3)  \\
\end{array} \label{eq:altereiD41}
\end{equation}
defines a coupled cell system admissible for the graph of Figure~\ref{fig:D4} which is not gradient. As the equivalence relation $\bowtie$ with classes $\{1,3\}$ and $\{2,4\}$ is balanced for this graph, we have that $\Delta_{\bowtie} = \{ {\bf x}:\, x_1 = x_3, \ x_2 = x_4\}$ is a synchrony subspace. Moreover, the corresponding quotient is the graph given in Figure~\ref{fig:quoD4}.  Equations~(\ref{eq:altereiD41}) restricted to  $\Delta_{\bowtie}$ correspond to:
\begin{equation} \label{eq:quoD4}
\begin{array}{l}
\dot x_{1} = x_2^2 + 2 x_1 x_2 \\
\dot x_{2} =x_1^2 + 2 x_2x_1,
\end{array} \, .
\end{equation}
corresponding to a gradient coupled cell system admissible for the graph in Figure~\ref{fig:quoD4}. Note that $f/2$, for $f$ as in (\ref{eq:AGFquoD4}), is an admissible gradient function for it.
\END
\end{example}

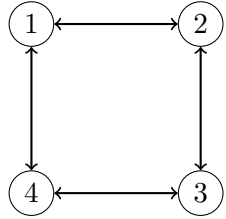
\begin{figure}[H] 
\begin{center}
\tiny{
\begin{tikzpicture}
 [scale=.15,auto=left, node distance=1.5cm, every node/.style={circle,draw}]
 \node[fill=white] (n4) at (1,1) {\small{4}};
 \node[fill=white] (n1) at (1,16)  {\small{1}};
 \node[fill=white] (n2) at (16,16)  {\small{2}};
\node[fill=white] (n3) at (16,1)  {\small{3}};
 \foreach \from/\to in {n4/n3,n4/n1,n2/n1,n2/n3}  \draw[<->, thick] (\from) -- (\to);
\end{tikzpicture}
\caption{The symmetric regular $4$-cell graph  of valency $2$.} 
\label{fig:D4}}
\end{center}
\end{figure}

We finally observe that, as more naturally expected,  gradient admissible vector fields can also be lifted to  vector fields that are still gradient.

\begin{example} \normalfont  
The vector field (\ref{eq:quoD4}) associated with the graph of Figure~\ref{fig:quoD4} can be lifted to an admissible  vector field for  the graph of Figure~\ref{fig:D4}  which is gradient. In fact,  taking $\tilde{f}: \r^4 \to \r$, 
\[\tilde{f}(x_1, x_2, x_3, x_4) =  \beta(x_2, x_1) + \beta(x_2, x_3) +  \beta(x_4, x_1) + \beta(x_4, x_3),  \]
for $\beta(x,y) =- 1/2( x^2y + x y^2)$, we have that $\tilde{f}$ is an admissible gradient function for the graph of Figure~\ref{fig:D4} which restricts to (\ref{eq:AGFquoD4}) on the synchrony subspace $\Delta$.  That is, the restriction of the admissible gradient function $\tilde{f}$ for the graph of Figure~\ref{fig:D4} is an admissible gradient function for its quotient graph in  Figure~\ref{fig:quoD4}.
\END
\end{example}

\subsection{Hamiltonian coupled cell systems} \label{subsec:Hamiltonian}

In this section quotients and lifts of symmetric graphs related to a Hamiltonian dynamical systems are investigated. A Hamiltonian coupled dynamical system is a system defined from ordinary differential equations  
\begin{equation} \label{eq:Hamiltonian}
(\dot{q}, \dot{p})^t \ = \ J \nabla h(q,p), 
\end{equation}
where 
\[  J \ = \ \left( \begin{array}{cc}
0_{nm}  & I_{nm} \\
-I_{nm} & 0_{nm} 
\end{array} \right).  \]
The variable vectors $q = (q_1, \ldots, q_n)$  and $p = (p_1, \ldots, p_n)$ are called position and momentum vectors, respectively, and form the  total phase space  $P = (\r^{2m})^n$ where, for $i=1, \ldots n,$  $(q_i, p_i) \in \r^{2m}$ are the position and momentum variables of each cell.   

  The function $h: \r^{mn} \times \r^{mn} \to \r$ satisfying (\ref{eq:Hamiltonian})  is the Hamiltonian function of the system, and when this system of equations is a coupled system associated with a given graph $G$ this is called an {\it admissible Hamiltonian function} for $G$. Hence, 
the Hamiltonian structure of a coupled system is in one-to-one correspondence with the existence of an admissible Hamiltonian function associated with it. In addition, a necessary condition for an admissible vector field to be Hamiltonian is that  each individual cell be also Hamiltonian. 

In the following theorem we characterize the admissible Hamiltonian functions of any symmetric graph. This corresponds  to Theorem~\ref{thm:AGF}  of the previous section adapted to the Hamiltonian setup.

\begin{theorem} \label{thm:AHF}
Let $G$ be a symmetric graph that may have multiple edges and loops, and let $A_{G}~ =~(a_{ij})$ denote the adjacency matrix of the graph $G$. A  function  $h: P \to { \r}$ is an  admissible Hamiltonian function associated with $G$ if, and only if, 
there exist  smooth functions $\alpha : \r^{2m} \to \r$ and
$\beta : \r^{2m} \times \r^{2m} \to \r$ such that 
\begin{equation} \label{eq:AHF}
 h(q,p) \ = \   \sum_{i=1, \ i \leq j}^n   a_{ij} \beta(q_i, q_j, p_i, p_ j) + \sum_{i=1}^{n} \alpha(q_{i}, p_i).
\end{equation}
If $G$ is bipartite and in addition there exist $v_i \in \V_1$ and $v_j \in \V_2$ with same valency, or if $G$ is not bipartite, then $\beta$ is invariant under the permutation 
$\sigma(q_i, q_j, p_i, p_j) = (q_j, q_i, p_j, p_i) $. 
\end{theorem}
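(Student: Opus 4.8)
The plan is to follow the strategy of Theorem~\ref{thm:AGF} (which in turn adapts \cite[Theorem 2.4]{MR14}), carried over to the symplectic setting by replacing the gradient vector field $-\nabla f$ with the Hamiltonian vector field $J\nabla h$. The first observation is that the map $h \mapsto J\nabla h$ is linear and the admissibility conditions on the vector field translate back, via $J$, into conditions on the partial derivatives of $h$: since $J$ merely permutes the blocks of coordinates (sending the $q$-block to the $p$-block and vice versa, up to sign), the requirement that the $q_i$-component and $p_i$-component of the vector field at cell $i$ depend only on $(q_i,p_i)$ and on the input variables $(q_j,p_j)$ (symmetrically in the inputs of equal valency), and that cells of equal valency share the same $g$, is equivalent to corresponding constraints on $\partial h/\partial q_i$ and $\partial h/\partial p_i$. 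So I would first record this dictionary and reduce the problem to characterizing the smooth functions $h$ whose first partials obey these constraints.

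Next I would run the structural argument. The constraint that the component of the vector field at cell $i$ involves only the variables of cell $i$ and of its neighbours forces, after differentiating, that $\partial^2 h/\partial z_i \partial z_k \equiv 0$ whenever cells $i$ and $k$ are distinct and non-adjacent (here $z_i = (q_i,p_i)$); this is the symplectic analogue of \cite[Lemma 2.2]{MR14} and \eqref{eq:3rdderivative}. Hence the mixed ``interaction'' content of $h$ lives only on edges of $G$. Integrating, one writes $h$ as a sum over vertices of a single-cell term $\alpha(q_i,p_i)$ plus a sum over (unordered) pairs $\{i,j\}$ with $a_{ij}>0$ of a pairwise term, where the coefficient $a_{ij}$ records the edge multiplicity; the fact that cells of equal valency are governed by the same $g$ forces the single-cell terms to be given by one common $\alpha$ and the edge terms by one common $\beta$, yielding the form \eqref{eq:AHF}. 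The symmetry statement for $\beta$ is handled exactly as in Theorem~\ref{thm:AGF}: passing to a maximal subgraph of $G$ containing no odd cycle and no multiple edge, one can consistently orient the remaining edges so that the two endpoints play distinguishable roles, unless some odd cycle or multiple edge survives (the non-bipartite or ``equal-valency across the bipartition'' case), in which case consistency around that cycle (or across the parallel edges) forces $\beta$ to be invariant under $\sigma(q_i,q_j,p_i,p_j)=(q_j,q_i,p_j,p_i)$.

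The one genuinely new point to verify, and the step I expect to be the main obstacle, is \emph{integrability}: in the gradient case the vector field $-\nabla f$ automatically has a symmetric Jacobian, so the decomposition assembled edge-by-edge glues to a globally defined function $f$ with no obstruction; in the Hamiltonian case one must check that the vector field $J\nabla h$ one wants to produce is actually Hamiltonian, i.e. that $J^{-1}(J\nabla h) = \nabla h$ is a gradient, equivalently that the candidate admissible vector field satisfies the appropriate closedness (symmetry of $J^{-1}$ times the Jacobian) condition. One must confirm that the class of admissible vector fields with individually-Hamiltonian cells is large enough that the pairwise-plus-single-cell ansatz is not over-constrained, and conversely that every $h$ of the form \eqref{eq:AHF} does yield a $G$-admissible vector field — the latter is a direct computation: $\partial h/\partial q_i$ and $\partial h/\partial p_i$ involve only $z_i$ and the $z_j$ with $a_{ij}>0$, with the right permutation symmetry, so $J\nabla h$ is admissible. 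Assembling these pieces gives the equivalence; the remaining details (smoothness of the integrated terms, the combinatorial orientation argument) are routine and identical in form to the proof of Theorem~\ref{thm:AGF}, so I would simply cite that proof with the indicated modifications.

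\begin{proof}
It follows the steps of the proof of Theorem~\ref{thm:AGF} (equivalently, of \cite[Theorem 2.4]{MR14}), with the gradient vector field $-\nabla f$ replaced by the Hamiltonian vector field $J\nabla h$. Since $J$ is an invertible linear map permuting the $q$- and $p$-blocks of coordinates up to sign, the $G$-admissibility conditions on $J\nabla h$ translate into the statement that for each $i$ the partial derivatives $\partial h/\partial q_i$ and $\partial h/\partial p_i$ depend only on $(q_i,p_i)$ and on the variables $(q_j,p_j)$ of the input cells $j$, symmetrically in the inputs of equal type, and that cells of equal valency share the same such dependence. Exactly as for \eqref{eq:3rdderivative}, this forces $\partial^2 h/\partial z_i\partial z_k\equiv 0$ for distinct non-adjacent cells $i,k$ (writing $z_i=(q_i,p_i)$), so that the interaction content of $h$ is supported on the edges of $G$. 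Integrating and using that cells of equal valency are governed by a common function, one obtains that $h$ has the form \eqref{eq:AHF} for some smooth $\alpha:\r^{2m}\to\r$ and $\beta:\r^{2m}\times\r^{2m}\to\r$, the coefficient $a_{ij}$ recording the multiplicity of the edge $\{i,j\}$. Conversely, any $h$ of the form \eqref{eq:AHF} has $\partial h/\partial q_i$ and $\partial h/\partial p_i$ depending only on $z_i$ and the $z_j$ with $a_{ij}\neq 0$, with the required permutation symmetry among equal-type inputs, so $J\nabla h$ is $G$-admissible. Finally, the invariance of $\beta$ under $\sigma(q_i,q_j,p_i,p_j)=(q_j,q_i,p_j,p_i)$ in the stated cases follows by the same argument as in Theorem~\ref{thm:AGF}: one passes to a maximal subgraph containing no odd cycles nor multiple edges and orients its edges consistently; if $G$ is non-bipartite, or bipartite with $v_i\in\V_1$ and $v_j\in\V_2$ of the same valency, an odd cycle or a multiple edge survives and consistency around it forces $\beta\circ\sigma=\beta$.
\end{proof}
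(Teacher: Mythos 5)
Your proposal is correct and follows essentially the same route as the paper: the authors likewise prove Theorem~\ref{thm:AHF} by transporting the argument of Theorem~\ref{thm:AGF} (i.e.\ of \cite[Theorem 2.4]{MR14}) to the Hamiltonian setting, noting that the key derivative condition \eqref{eq:3rdderivative} holds with each $x_l$ replaced by $q_l$ and $p_l$, and that the invariance of $\beta$ is handled by the same maximal-subgraph argument. Your additional discussion of how $J$ translates admissibility of $J\nabla h$ into constraints on $\nabla h$ is a correct elaboration of what the paper leaves implicit.
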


\begin{proof}
It is the same as the proof of Theorem~\ref{thm:AGF}. To see this,  consider the similarity  between  the expressions  (\ref{eq:AGF}) and  (\ref{eq:AHF}) so that  the algebraic arguments used to prove the result for an admissible gradient function in the first case hold to construct an  admissible Hamiltonian function for the second. For the present case, it is crucial to notice that   (\ref{eq:3rdderivative}) holds by replacing each $x_l$, for
$l = i,j,k,$  by  $q_l$ and $p_l$. 
\end{proof}

\begin{example} \normalfont
Consider the graph of Figure~\ref{fig:quoD4}. Taking the cell phase spaces to be $\r$, the function $h: \r^4 \to \r$ defined by
\begin{equation} \label{eq:AHFquoD4}
 h(q_1, q_2, p_1, p_2)  = 2 p_1^2 q_2 + 2 p_2^2 q_1, 
 \end{equation}
is an admissible Hamiltonian function for this graph.
\END 
\end{example}

As expected from the discussion in Section~\ref{subsec:gradient} for gradient coupled systems, an admissible vector field associated with a symmetric graph  can admit a restriction to an invariant subspace with a Hamiltonian dynamics even if it is not Hamiltonian, as we illustrate in the next example.

\begin{example} \normalfont 
 As before, consider the symmetric directed graph of Figure~\ref{fig:D4}. Note that the following coupled cell system
\begin{equation} \label{eq:D41}
\begin{array}{l}
\dot q_{1} =  p_1 (q_2+q_4) \\
\dot q_{2} =p_2(q_1+q_3) \\
\dot q_{3} = p_3 (q_2+q_4)  \\
\dot q_{4} = p_4 (q_1+q_3)  \\
\dot p_{1} = - p_2 p_4  \\
\dot p_{2} =- p_1 p_3 \\
\dot p_{3} = - p_2 p_4   \\
\dot p_{4} = - p_1 p_3  \\
\end{array} 
\end{equation}
is admissible for that graph but it is not  Hamiltonian. However, its restriction to the synchrony subspace $\Omega = \{ ({\bf q},{\bf p}):\, (q_1, p_1) = (q_3,p_3) \ (q_2,p_2) = (q_4,p_4)\}$, leads to the  quotient graph given in Figure~\ref{fig:quoD4} with reduced equations
\begin{equation} \label{eq:HquoD4}
\begin{array}{l}
\dot q_{1} = 2 p_1 q_2 \\
\dot q_{2} = 2 p_2 q_1 \\
\dot p_{1} = - p_2^2  \\
\dot p_{2} = - p_1^2  \\
\end{array} \, .
\end{equation}
These equations define an admissible Hamiltonian  coupled cell system for the quotient graph in  Figure~\ref{fig:quoD4}. Note that $h/2$, for $h$ as in (\ref{eq:AHFquoD4}), is an admissible Hamiltonian function for this quotient graph. 
\END 
\end{example}

Again as for gradient coupled systems,  admissible Hamiltonian systems can also be lifted to  vector fields that are still Hamiltonian.

\begin{example} \normalfont 
The admissible and Hamiltonian coupled cell system  (\ref{eq:HquoD4}) associated with the graph of Figure~\ref{fig:quoD4} can be lifted to a coupled cell system which is admissible and Hamiltonian for  its lift of Figure~\ref{fig:D4}. Notice that $\tilde{h}: (\r^4)^2 \to \r$ defined by 
\[\tilde{h}(q, p) =  \beta(q_2, q_1, p_2, p_1) + \beta(q_2, q_3, p_3, p_3) +  \beta(q_4, q_1, p_4, p_1) + \beta(q_4, q_3, p_4, p_3),  \]
for $\beta(x,y,z,w) =  1/2 (z^2y + 2w^2x)$, is an admissible Hamiltonian function for this graph which restricts to (\ref{eq:AHFquoD4}) on the synchrony subspace $\Omega$.  
\END 
\end{example}

\subsection{Gradient and Hamiltonian coupled cell systems}

The general form of the gradient admissible functions and  Hamiltonian admissible functions  for $G$ and  $Q$ are related in the following way:

\begin{proposition}
Let $G$ be a symmetric graph and let $\bowtie$ be a balanced relation on the set of vertices of $G$ defining a symmetric connected quotient graph $Q = G/\bowtie$. Let $f^G$ ($h^G$) to be a gradient (Hamiltonian)  admissible function of a gradient (Hamiltonian) coupled cell system of $G$. Then, there is a gradient (Hamiltonian)  admissible function $f^Q$ ($h^Q$)  for the coupled cell system of $G$ restricted to $\Delta_{\bowtie}$, such that 
\[  
f^G|_{\Delta_{\bowtie}} \ \equiv  \ k \  f^{Q}, \ \  h^G|_{\Delta_{\bowtie}} \ \equiv  \ k \  h^{Q}, 
 \]
where $k$ is the cardinality of the $\bowtie$-classes.
\end{proposition}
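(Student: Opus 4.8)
The plan is to combine the structural description of a symmetric connected quotient provided by Theorem~\ref{thm:bidquo} (together with Corollary~\ref{cor:connectedquo}) with the explicit formulas for the admissible gradient and Hamiltonian functions given in Theorems~\ref{thm:AGF} and~\ref{thm:AHF}. Since $Q = G/\bowtie$ is symmetric and connected, Theorem~\ref{thm:bidquo} forces every pair of connected $\bowtie$-classes to have the same cardinality, and connectedness of $Q$ then propagates this along a path, so all $\bowtie$-classes have a common cardinality $k$ (this is exactly the content behind Corollary~\ref{cor:connectedquo}, giving $n = mk$). Fix representatives $v_1, \ldots, v_m$ of the $m$ classes $I_1, \ldots, I_m$, so that $Q$ has vertex set $\{1,\ldots,m\}$ and adjacency matrix $A_Q = [q_{ij}]$ with $q_{ij}=q_{ji}$.

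First I would write out $f^G|_{\Delta_\bowtie}$. By Theorem~\ref{thm:AGF}, $f^G(x) = \sum_{i\le j} a_{ij}\,\beta(x_i,x_j) + \sum_i \alpha(x_i)$, where $A_G=(a_{ij})$. On $\Delta_\bowtie$ each coordinate $x_i$ collapses to the value $y_r$ of its class $I_r$, so $f^G|_{\Delta_\bowtie}$ becomes a function of $(y_1,\ldots,y_m)$. The single-cell term $\sum_i\alpha(x_i)$ becomes $\sum_{r=1}^m k\,\alpha(y_r) = k\sum_r \alpha(y_r)$, using that each class has exactly $k$ elements. For the coupling term, I would count: the total number of directed edges from vertices of class $I_s$ to vertices of class $I_r$ equals $k_r q_{rs} = k\,q_{rs}$ (this is the count already used in the proof of Theorem~\ref{thm:quobidi}), so when $r\ne s$ the contribution of all those edges to $\sum_{i\le j}a_{ij}\beta(x_i,x_j)$ is $k\,q_{rs}\,\beta(y_r,y_s)$ — here I use that $\beta$ is symmetric in the non-bipartite / equal-valency case handled by Theorem~\ref{thm:AGF}, so it does not matter which endpoint is "$i$" and which is "$j$", and $i\le j$ just counts each unordered edge once. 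For the diagonal blocks $r=s$, a vertex of class $I_r$ has $q_{rr}$ loops-or-neighbours within its own class, and summing $a_{ij}\beta(x_i,x_j)$ with $i\le j$ over the block contributes, after collapsing, $k\cdot\tfrac{q_{rr}}{?}$ — I would handle loops ($i=j$) and internal edges carefully, but the net effect is again an overall factor $k$ times a term $q_{rr}\beta(y_r,y_r)$, matching the form of an admissible function for $Q$. Collecting everything, $f^G|_{\Delta_\bowtie} = k\big(\sum_{r\le s}q_{rs}\beta(y_r,y_s) + \sum_r\alpha(y_r)\big)$, and the bracket is, by Theorem~\ref{thm:AGF} applied to the symmetric graph $Q$, precisely an admissible gradient function $f^Q$ for $Q$. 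This is where I would also invoke Theorem~5.2 of~\cite{GST05} (cited in the text) to know that the restricted vector field is genuinely $Q$-admissible, so that "$f^Q$ admissible for $Q$" and "its gradient equals the restricted field" are consistent.

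The Hamiltonian case is formally identical: by Theorem~\ref{thm:AHF}, $h^G(q,p)=\sum_{i\le j}a_{ij}\beta(q_i,q_j,p_i,p_j)+\sum_i\alpha(q_i,p_i)$, $\beta$ invariant under the swap $\sigma$, and restricting to $\Delta_\bowtie$ (now the synchrony subspace in the doubled phase space, identifying $(q_i,p_i)$ within each class) and using the same edge-count $k\,q_{rs}$ gives $h^G|_{\Delta_\bowtie}=k\,h^Q$ with $h^Q$ the corresponding expression from Theorem~\ref{thm:AHF} for $Q$; again the $\sigma$-invariance of $\beta$ is what lets the unordered edge count go through. The main obstacle, and the only place needing genuine care rather than bookkeeping, is the diagonal blocks: correctly matching the within-class edges and loops of $G$ against the loop entry $q_{rr}$ of $Q$ and verifying that the collapsed contribution still carries exactly the factor $k$ (no over- or under-counting of unordered pairs $\{i,j\}$ with $i,j\in I_r$, and the $i=j$ loop terms). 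Once that combinatorial normalization is pinned down — using $k_i q_{ij}=k_j q_{ji}$ with all $k_i=k$ — the rest is a direct substitution. I would also remark that the proportionality constant $k$ is the same for the gradient and Hamiltonian statements because in both cases it arises solely from the cardinality of the classes, not from the analytic nature of the coupling.
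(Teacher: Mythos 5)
Your route is the same as the paper's: the paper's entire proof consists of noting that Theorem~\ref{thm:bidquo} together with connectedness forces all $\bowtie$-classes to have a common cardinality $k$, and then asserting that the result ``follows from Theorems~\ref{thm:AGF} and \ref{thm:AHF} and the definition of quotient graph''. You have expanded that citation into the actual substitution, and your treatment of the $\alpha$-terms and of the off-diagonal blocks is correct: for $r\neq s$ each unordered pair $\{i,j\}$ with $i\in I_r$, $j\in I_s$ occurs exactly once in $\sum_{i\le j}$, the symmetry of $\beta$ makes the orientation irrelevant, and $\sum_{i\in I_r,\,j\in I_s}a_{ij}=k\,q_{rs}$ produces the factor $k$.

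The step you left open --- the diagonal blocks --- is a genuine gap, not mere bookkeeping, and the paper glosses over it as well. With $A_G$ symmetric, $|I_r|=k$ and within-class row sum $q_{rr}$, write $L_r=\sum_{i\in I_r}a_{ii}$ for the number of loops inside $I_r$. The coefficient of $\beta(y_r,y_r)$ in $f^G|_{\Delta_{\bowtie}}$ coming from $\sum_{i\le j}a_{ij}\beta(x_i,x_j)$ is $\tfrac12(k q_{rr}-L_r)+L_r=\tfrac12(k q_{rr}+L_r)$, whereas $k\,f^Q$ requires coefficient $k\,q_{rr}$; these agree only when $L_r=k\,q_{rr}$, i.e.\ when all within-class adjacency of $G$ is through loops. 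The paper's own Example~\ref{example:quotientbi} already exhibits the discrepancy: for $Q_1$ of Figure~\ref{fig:exq1}(a) the class $\{5,6\}$ has $k=2$, loop entry $q_{rr}=1$ in $A_{Q_1}$, and $L_r=0$ in $G$, so the loop term of $f^G|_{\Delta_{\bowtie_1}}$ carries coefficient $1$ while $2f^{Q_1}$ carries $2$. So either the identity $f^G|_{\Delta_{\bowtie}}\equiv k\,f^Q$ must be read with a different convention for how loops and within-class edges enter (\ref{eq:AGF}), or the diagonal term of $f^Q$ must be taken with a renormalized $\beta$; in any case your assertion that ``the net effect is again an overall factor $k$'' does not follow without resolving this, and the identical caveat applies to the Hamiltonian half of the statement.
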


\begin{proof}
It follows from Theorem~\ref{thm:bidquo} that the cardinality $k$ of every $\bowtie$-class is the same. The result then follows from Theorems~\ref{thm:AGF} and \ref{thm:AHF} and the definition of quotient graph. 
\end{proof}

\begin{corollary}\label{prop:admfunctions}
Let $G$ be a symmetric graph and let $\bowtie$ be a balanced relation on the set of vertices of $G$ defining a symmetric quotient graph $Q = G/\bowtie$. Then:\\
(i) The gradient (Hamiltonian) admissible functions for $G$ restricted to the synchrony subspace determined by $\bowtie$ are  gradient (Hamiltonian) admissible functions for $Q$. \\
(ii) A gradient (Hamiltonian) admissible function for $Q$ is a restriction of a gradient (Hamiltonian) admissible function for $G$. \\
\end{corollary}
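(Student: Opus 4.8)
The plan is to reduce everything to the explicit parametrisations supplied by Theorems~\ref{thm:AGF} and~\ref{thm:AHF}: an admissible gradient function for a symmetric graph is determined by an internal function $\alpha$ and a symmetric coupling function $\beta$ through the sum (\ref{eq:AGF}), and an admissible Hamiltonian function is determined through the formally identical sum (\ref{eq:AHF}) with $\beta$ invariant under the swap $\sigma$. Because the two formulas have the same algebraic shape, I would prove the gradient case in full and then observe that the Hamiltonian case is obtained verbatim upon replacing each $x_i$ by the pair $(q_i,p_i)$ and the symmetry of $\beta$ by $\sigma$-invariance. The combinatorial engine throughout is the edge-counting identity from the proof of Theorem~\ref{thm:quobidi}, namely that the number of bidirectional edges of $G$ running between two distinct $\bowtie$-classes $I$ and $J$ equals $k_I q_{IJ}=k_J q_{JI}$, together with the fact, from Theorem~\ref{thm:bidquo}, that symmetry of $Q$ forces $k_I=k_J$ for every connected pair of classes.

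For part (i), I would start from an admissible $f^G=\sum_{i\le j}a_{ij}\beta(x_i,x_j)+\sum_i\alpha(x_i)$, write $\Delta_{\bowtie}$ in the coordinates $y_I$ (one per class $I$, of cardinality $k_I$), and group the edge-sum according to the classes of the two endpoints. The off-diagonal groups ($I\ne J$) collapse to $(k_I q_{IJ})\,\beta(y_I,y_J)$, and since $Q$ is symmetric the cardinality is constant on each connected component, so this coefficient can be written as $k\,q_{IJ}$; the internal sum becomes $\sum_I k_I\,\alpha(y_I)$. After absorbing the single-variable diagonal contributions (see below) into the internal part, the restriction acquires exactly the shape $\sum_{I\le J}q_{IJ}\,\widetilde\beta(y_I,y_J)+\sum_I\widetilde\alpha(y_I)$ of (\ref{eq:AGF}) for $Q$, with $\widetilde\beta=k\beta$; by Theorem~\ref{thm:AGF} applied to $Q$ this is an admissible gradient function for $Q$. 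For connected $Q$ this is precisely the content of the Proposition above (there $f^G|_{\Delta_{\bowtie}}\equiv k\,f^Q$), and the disconnected case follows by treating separately each connected component, on which the cardinality is constant.

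For part (ii), I would run the same correspondence backwards, and crucially build the lift on the \emph{given} graph $G$ (using its own adjacency matrix $a_{ij}$), not on a new lift. Given an admissible $f^Q=\sum_{I\le J}q_{IJ}\,\beta'(y_I,y_J)+\sum_I\alpha'(y_I)$ for $Q$, I set $\beta:=\tfrac1k\beta'$ and choose the internal function $\alpha$ so that the diagonal bookkeeping balances, and I let $f^G:=\sum_{i\le j}a_{ij}\beta(x_i,x_j)+\sum_i\alpha(x_i)$, which is admissible for $G$ by Theorem~\ref{thm:AGF}. Re-running the grouping of part (i) then gives $f^G|_{\Delta_{\bowtie}}=f^Q$; equivalently, since the Proposition above exhibits the forward map $f^G\mapsto\tfrac1k\,f^G|_{\Delta_{\bowtie}}$ at the level of the defining data $(\alpha,\beta)$, part (ii) is exactly the assertion that this map is onto, which one reads off by inverting the (linear) relation between the data of $G$ and of $Q$.

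The step I expect to be the main obstacle is the bookkeeping of the diagonal, i.e. self-coupling, terms. Within-class edges of $G$ and loops both project, under restriction to $\Delta_{\bowtie}$, to the single-variable term $\beta(y_I,y_I)$, but their total multiplicity is the number of within-class edges-plus-loops, which is \emph{not} $k_I q_{II}$: an internal edge joining two vertices of the same class is counted once among the unordered pairs $i\le j$, yet it contributes to the in-degree of \emph{both} endpoints and so is counted twice inside $k_I q_{II}$. Consequently these diagonal terms cannot be matched directly against $q_{II}\widetilde\beta(y_I,y_I)$; they must instead be folded into the internal function $\widetilde\alpha$ (resp.\ $\alpha$), and the delicate point is to verify that the resulting internal function is a single, well-defined function across all classes. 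This is exactly where the equal-cardinality conclusion of Theorem~\ref{thm:bidquo} and the symmetry of $Q$ are brought to bear, pinning down the self-coupling multiplicities so that the absorbed diagonal contributions are consistent with the form (\ref{eq:AGF}) for $Q$.
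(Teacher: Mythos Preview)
Your plan coincides with the paper's: the Corollary is stated without its own proof, as an immediate consequence of the preceding Proposition, whose one-line argument invokes exactly Theorem~\ref{thm:bidquo} (equal cardinality of the classes) together with Theorems~\ref{thm:AGF} and~\ref{thm:AHF} (the explicit parametrisations). You supply far more detail than the paper does, and you are right that the diagonal self-coupling bookkeeping is the only nontrivial step.

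That said, your proposed resolution of that step has a gap. You assert that the equal-cardinality conclusion of Theorem~\ref{thm:bidquo} together with the symmetry of $Q$ ``pin down the self-coupling multiplicities so that the absorbed diagonal contributions are consistent''. This is not true as stated. Writing $c_I=\sum_{i\le j,\ i,j\in I}a_{ij}$ for the within-class coefficient that appears after restriction, one computes $c_I = k\,q_{II}-E_I$, where $E_I$ is the number of non-loop edges of $G$ internal to the class $I$. Nothing in the hypotheses forces $E_I$ to be independent of $I$: for instance, take two classes of size $2$ with the same $q_{II}=1$, one realised by a single edge between its two vertices ($E_I=1$) and the other by a loop at each vertex ($E_I=0$); the resulting $G$ is symmetric, $\bowtie$ is balanced, and $Q$ is symmetric, yet the residues $c_I-k\,q_{II}=-E_I$ differ, so fixing $\widetilde\beta=k\beta$ and absorbing the remainder into a \emph{single} $\widetilde\alpha$ fails. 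The paper's terse citation of Theorems~\ref{thm:AGF} and~\ref{thm:AHF} glosses over this as well.

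A cleaner route that avoids the diagonal bookkeeping altogether is to argue at the level of vector fields rather than potentials. Restriction to $\Delta_{\bowtie}$ carries $G$-admissible vector fields to $Q$-admissible ones (this is the GST restriction theorem quoted in the paper), and since every $\bowtie$-class has the common size $k$, the Euclidean metric on the total phase space restricts to $k$ times the standard metric on the $Q$-phase space; hence the restriction of a gradient (respectively Hamiltonian) vector field is again gradient (Hamiltonian), with potential $\tfrac{1}{k}\,f^G|_{\Delta_{\bowtie}}$ (respectively Hamiltonian $\tfrac{1}{k}\,h^G|_{\Delta_{\bowtie}}$). This gives (i) directly, and (ii) follows either by your lifting of the data $(\alpha,\beta)$ or by reading the same metric identification backwards.
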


As we have shown, from our results, it follows that gradient and Hamiltonian dynamics can appear in coupled cell systems that are neither gradient nor Hamiltonian. This already comes from the fact that non symmetric graphs can often admit quotient graphs by balanced relations that are symmetric. But this comes also from the more subtle situation in the level of symmetric graphs, for which gradient and Hamiltonian structure can occur on proper synchrony subspaces. 

For a given  gradient (or Hamiltonian) coupled cell system associated to a symmetric graph $G$, we are also able to identify the flow invariant subspaces of the total phase space where the restriction of the dynamics is still gradient (or Hamiltonian).
These are precisely the subspaces defined by each balanced relation $\bowtie$ for which $Q = G/\bowtie$ is symmetric. 

\begin{example} \normalfont 
Consider  a gradient coupled cell system associated with the symmetric graph $G$ in Figure~\ref{fig:exf}, part of the gradient dynamics of the system appears in the restriction to the synchrony subspace $\Delta_{\bowtie_1} = \{ x: \, x_{1}  = x_{2},\  x_{3}  = x_{4},\ x_{5}  = x_{6} \}$ but not in the restriction to the synchrony subspace $\Delta_{\bowtie_2} = \{ x: \, x_{1}  = x_{2} = x_{3}  = x_{4} \}$. The same goes for Hamiltonian dynamics. 
\END
\end{example}

%There is also the recent work in ~\cite{DP09,DP10} that makes use of bidirectional graphs to characterize and enumerate the periodic patterns of synchrony in $n$-dimensional Euclidean lattice networks  with nearest neighbour coupling architecture. Given such a lattice and a periodic synchrony pattern, the quotient of the lattice by the synchrony space associated with the pattern is a bidirectional graph where each vertex receives an even number of inputs. That is, the bidirectional graph has even {\em degree} or {\em valency}. The results in \cite{DP09} relate the existence of those periodic patterns with the synchrony patterns of finite bidirectional coupled cell networks of even degree, making use of a classical theorem of graph theory concerning the factorisation of even degree regular graphs. 

\subsection*{Acknowledgments}The first two authors acknowledge partial support by CMUP (UID/MAT/00144/2013), which is funded by FCT (Portugal) with national (MEC) and European structural funds through the programs FEDER, under the partnership agreement PT2020. MM was supported by CAPES under  CAPES/FCT grant 88887.125430/2016-00. She also thanks  University of Porto for their great hospitality during the visit when this work was carried out.

\end{document}